\definecolor{orange}{rgb}{ .90, .60, .0}
\definecolor{light-blue}{rgb}{ .35, .70, .90}
\definecolor{bluish-green}{rgb}{ .0, .60, .50}
\definecolor{bright-yellow}{rgb}{ .95, .90, .25}
\definecolor{vermilion}{cmyk}{ 0, .80, 1.00, .0 }
\definecolor{navy-blue}{rgb}{ 0, .45, .75}
\definecolor{redish-purple}{rgb}{ .80, .60, .70}
\DeclareMathOperator{\End}{End}
\DeclareMathOperator{\Id}{Id}
\DeclareMathOperator{\Int}{Int}
\DeclareMathOperator{\Ker}{Ker}
\DeclareMathOperator{\Ima}{Im}
\DeclareMathOperator{\vol}{vol}
\DeclareMathOperator{\prim}{prim}
\newcommand{\ii}{\sqrt{-1}}
\newcommand{\adjoint}{*}
\newcommand{\dual}{*}
\newcommand{\deRham}{\mathrm{dR}}
\newcommand{\Rumin}{\mathrm{R}}
\newcommand\RN {\cE }
\newcommand{\suR}{\text{sR}}
\newcommand\Mst{\text{ s.t. }}
\newcommand\bbC{\mathbb{C}}
\newcommand\bbR{\mathbb{R}}
\newcommand\cE{\mathcal{E}}
\newcommand\cL{\mathcal{L}}
\newcommand\cQ{\mathcal{Q}}
\newcommand\sF{\mathscr{F}}
\newcommand{\wt}[1]{\widetilde{#1}}
\newcommand{\rest}[1]{\big\rvert_{#1}} 
\theoremstyle{plain}
\newtheorem{defi}{Definition}[section] 
\newtheorem{corollary}[defi]{Corollary}
\newtheorem{proposition}[defi]{Proposition}
\newtheorem*{proposition*}{Proposition}
\newtheorem{theorem}[defi]{Theorem}
\newtheorem*{theorem*}{Theorem}
\begin{document}

\title[Harmonic forms and the Rumin complex]{
	Harmonic forms and the Rumin complex \\ on Sasakian manifolds
}
\author{
	Akira Kitaoka 
}
\address{Graduate School of Mathematical Sciences \\ The University of Tokyo \\ 3-8-1 Komaba, Meguro, Tokyo 153-8914 Japan}
\email{kitaoka-akira7680@ms.u-tokyo.ac.jp}

\maketitle

\begin{abstract}
  We show that the kernel of the Rumin Laplacian agrees with that of the Hodge-de Rham Laplacian on compact Sasakian manifolds.
  As a corollary, we obtain another proof of primitiveness of harmonic forms.
  Moreover, the space of harmonic forms coincides with the sub-Riemann limit of Hodge-de Rham Laplacian when its limit converges.
  Finally, we express the analytic torsion function associated with the Rumin complex in terms of the Reeb vector field.
\end{abstract}

\section{Introduction}

Let $(M,H)$ be a compact contact manifold of dimension $2n+1$,
and $E$ be a flat vector bundle associated with the unitary representation $\alpha \colon \pi_1 (M) \to \mathrm{U}(r)$.
Rumin \cite{Rumin-94} introduced a complex
$(\cE ^\bullet (M,E) , d_{\Rumin }^\bullet )$,
which is
induced by the de Rham complex $(\Omega^{\bullet}(M,E) , d )$.
Here, $\Omega^{k}(M,E)$ is the space of $E$-valued differential $k$-forms.
A specific feature of the complex is that
the operator $D = d_{\Rumin }^n \colon \cE ^{n} (M,E) \to \cE ^{n+1} (M,E) $ in `middle degree' is a second-order,
while
$d_{\Rumin }^k \colon \cE ^{k} (M,E) \to \cE ^{k+1} (M,E) $ for $k \not = n$ are first order.
Moreover, the space $\cE ^\bullet (M,E)$ is included in $\Omega^{\bullet}(M,E)$ (see \S \ref{sec: The Rumin complex}).
The complex is induced by the exterior derivatives.
Let $a_k = 1 / \sqrt{|n-k|}$ for $k \not = n$ and $a_n = 1$.
Then, $(\cE ^\bullet (M,E) , d_\RN ^\bullet)$,
where $d_\RN ^k = a_k d_{\Rumin}^k$,
is also a complex.
We call $(\cE ^\bullet (M,E) , d_\RN ^\bullet)$ the {\em Rumin complex}.

Let $\theta$ be a contact form of $H$,
and $J$ be an almost complex structure on $H$.
Let $T$ be the Reeb vector field of $\theta$.
Then we may define a Riemann metric $g_{\theta , J}$ on $TM$ by extending the Levi metric $ d \theta (- ,J-)$ on $H$ (see \S \ref{sec: The Rumin complex}).
The quadruple $(M,H,\theta,J)$ is called a Sasakian manifold if  $(M,H,J)$ is a CR manifold and $\cL_T J = 0$.
In virtue of the rescaling, $ d_\RN ^\bullet$
satisfies K\"{a}hler-type identities on Sasakian manifolds \cite{Rumin-00}.
In addition, the analytic torsion of the Rumin complex is written by using the Betti number and the Ray-Singer torsion on the lens spaces \cite{Kitaoka-20}.

Following \cite{Rumin-94}, we define the Rumin Laplacians $\Delta_\RN $ associated with \\ $(\cE ^\bullet (M,E) , d_\RN ^\bullet)$ and the metric $g_{\theta , J}$ by
\[
  \Delta_\RN ^k
  :=
  \begin{cases}
  ( d_\RN d_\RN^{\adjoint })^2
  + (d_\RN^{\adjoint } d_\RN )^2 , & k \not = n, \, n+1 , \\
  ( d_\RN d_\RN^{\adjoint })^2
  + D^{\adjoint } D , & k=n, \\
  D D^{\adjoint }
  + (d_\RN^{\adjoint } d_\RN )^2 , & k =n+1.
  \end{cases}
\]
Rumin showed that $\Delta_\RN $ has discrete eigenvalues with finite multiplicities.

\begin{theorem}
  \label{theo:harmonic_form_coincides_with_Ker_Delta_E}
  Let $(M,H,\theta,J)$ be a compact Sasakian manifold of dimension $2n+1$.
  Then, the kernel of the Rumin Laplacian agrees with that of the Hodge-de Rham Laplacian,
  that is,
  \[
    \Ker ( \Delta_{\deRham} \colon \Omega^k (M) \to \Omega^k (M) ) =
    \Ker ( \Delta_{\RN} \colon \RN^k (M) \to \RN^k (M) )
    .
  \]
\end{theorem}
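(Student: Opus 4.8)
The plan is to identify both kernels with the \emph{same} space of primitive horizontal harmonic forms, using the realization $\cE^\bullet(M)\subset\Omega^\bullet(M)$ from \S\ref{sec: The Rumin complex}, under which (for $k\le n$) $\cE^k$ is the space of primitive horizontal $k$-forms and (for $k\ge n+1$) the space $\theta\wedge(\text{primitive horizontal})$. The first step is a spectral reduction of the fourth-order operator $\Delta_\RN$. Because $(d_\RN d_\RN^{\adjoint})^2$ and $(d_\RN^{\adjoint}d_\RN)^2$ are squares of non-negative self-adjoint operators, pairing $\Delta_\RN^k\eta=0$ with $\eta$ and using $\langle(d_\RN d_\RN^{\adjoint})^2\eta,\eta\rangle=\|d_\RN d_\RN^{\adjoint}\eta\|^2$ (and likewise for $D,D^{\adjoint}$) collapses the kernel to first-order conditions: for $k\ne n,n+1$ one gets $\Ker\Delta_\RN^k=\Ker d_\RN\cap\Ker d_\RN^{\adjoint}$, for $k=n$ one gets $\Ker d_\RN^{\adjoint}\cap\Ker D$, and dually for $k=n+1$. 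Thus $\Ker\Delta_\RN$ is exactly the space of $d_\RN$-harmonic sections.

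By the Hodge decomposition for the Rumin complex (available since $\Delta_\RN$ has discrete spectrum), this harmonic space is isomorphic to $H^k(\cE^\bullet,d_\RN)\cong H^k_{\deRham}(M)$, so $\dim\Ker\Delta_\RN^k=b_k=\dim\Ker\Delta_{\deRham}^k$. Consequently it suffices to establish a single inclusion, and I would prove $\Ker\Delta_\RN\subseteq\Ker\Delta_{\deRham}$, since a Rumin-harmonic $\eta\in\cE^k$ is \emph{automatically} primitive and horizontal by construction, which avoids assuming any primitiveness of de Rham harmonic forms at the outset. For such an $\eta$ one decomposes the full exterior derivative as $d\eta=(d\eta)_H+\theta\wedge\mathcal{L}_T\eta$, where the horizontal part $(d\eta)_H$ is governed by $d_\RN$ (away from the middle) or by $D$ (at the middle), and the Reeb part is $\theta\wedge\mathcal{L}_T\eta$.

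The Sasakian hypothesis enters decisively here. Since $T$ is the Reeb field, $\mathcal{L}_T\theta=0$, and $\mathcal{L}_T J=0$ is precisely the Sasakian condition; together these show that $\mathcal{L}_T$ preserves $g_{\theta,J}$, hence commutes with the Hodge star and with every operator of the Rumin complex, so it preserves $\Ker\Delta_\RN$. Using Rumin's K\"ahler-type identities on Sasakian manifolds \cite{Rumin-00}, which realize $\mathcal{L}_T$ as a commutator of the $d_\RN$-operators with the Lefschetz operator (the CR analogue of $[\Lambda,\partial_b]\sim\bar\partial_b^{\adjoint}$ and of the equality of the $\partial_b$- and $\bar\partial_b$-Laplacians), I would show that $d_\RN\eta=d_\RN^{\adjoint}\eta=0$ forces the separate vanishing of the $\partial_b$- and $\bar\partial_b$-components and hence $\mathcal{L}_T\eta=0$. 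With $\mathcal{L}_T\eta=0$ the Reeb part of $d\eta$ vanishes, and $d_\RN\eta=0$ (resp.\ $D\eta=0$ in the middle) kills the horizontal part, giving $d\eta=0$; the adjoint identities give $d^{\adjoint}\eta=0$ symmetrically. Hence $\eta\in\Ker\Delta_{\deRham}$, and the dimension count upgrades the inclusion to equality. Primitiveness of de Rham harmonic forms then follows, since they now coincide with elements of $\cE^\bullet$, yielding the stated Corollary.

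The main obstacle I expect is exactly the identity $\mathcal{L}_T\eta=0$ for Rumin-harmonic $\eta$: this is where the Sasakian (rather than merely contact) structure must be used in full, through the closed system of K\"ahler-type commutation relations that tie $\mathcal{L}_T$ to the brackets of $d_\RN$ and $d_\RN^{\adjoint}$ with $\Lambda$. A secondary source of care is the middle degrees $k=n,n+1$, where the order jump of $D$ means both the spectral reduction of Step~1 and the component analysis of $d\eta$, $d^{\adjoint}\eta$ must be carried out separately, matching the second-order horizontal part of $d$ on primitive $n$-forms against $D$ rather than against $d_\RN$.
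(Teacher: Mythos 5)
Your proposal is correct and follows essentially the same route as the paper: both prove the single inclusion $\Ker\Delta_{\RN}\subseteq\Ker\Delta_{\deRham}$ by using the Sasakian K\"ahler-type identities to force $\mathcal{L}_T\eta=0$ and the vanishing of the $\partial_b$-, $\overline{\partial}_b$-components (treating $k=n$ separately via the positivity of $\Delta_{\RN}$ on $\Ima\partial_{\RN}+\Ima\overline{\partial}_{\RN}$), and then upgrade to equality by the dimension count $\Ker\Delta_{\RN}^k\cong H^k(\cE^\bullet,d_{\RN})\cong H^k_{\deRham}(M)$. The only point to make explicit in your Step 3 is that $d_{\RN}\eta=0$ alone controls only the primitive part of $d_b\eta$ when $k\le n-1$; as you anticipate, the commutator identities $[\Lambda,d_b]=\ii(\overline{\partial}_b^{\adjoint}-\partial_b^{\adjoint})$ together with $\partial_b^{\adjoint}\eta=\overline{\partial}_b^{\adjoint}\eta=0$ show $d_b\eta$ is itself primitive, closing that gap exactly as the paper does.
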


Recently, Case showed that by \cite[Proposition 12.10]{Case-21},
for a compact Sasakian manifold $M$,
\begin{align}
  & \Ker ( \Delta_{\RN} \colon \RN^k (M) \to \RN^k (M) )
	\notag \\
	& =
  \bigoplus_{i+j=k}
  \Ker ( \Delta_{\RN} \colon \RN^k (M) \to \RN^k (M) ) \cap C^{\infty} \left(M, \bigwedge^{i,j} H^{\dual} \right)
  ,
  \label{eq:Hodge-decomposition}
\end{align}
where
\[
	\bigwedge^{i,j} H^{\dual}
	:=
	\bigwedge^{i}
	\left\{ \phi \in  H^{\dual} \, \middle| \, J \phi = \ii \phi \right\}
	\otimes
	\bigwedge^{j}
	\left\{ \phi \in  H^{\dual} \, \middle| \, J \phi = -\ii \phi \right\}
	.
\]
Using \eqref{eq:Hodge-decomposition},
he \cite{Case-21} recovered a topological obstruction \cite{Blair-Goldberg-67, Fujitani-66} to the existence of Sasakian structure on a given manifold in terms of its Betti numbers.

From Theorem \ref{theo:harmonic_form_coincides_with_Ker_Delta_E} and \eqref{eq:Hodge-decomposition},
we give another proof of the following corollary:
\begin{corollary}
	\label{cor:Tachi-Fuji}
  {\rm (\cite[Theorems 7.1, 8.1]{Tachibana-65}, \cite[Corollary 4.2]{Fujitani-66})}
  In the setting of Theorem \ref{theo:harmonic_form_coincides_with_Ker_Delta_E},
	for $\phi \in \Ker (\Delta_{\deRham} \colon \Omega^k (M) \to \Omega^k (M))$,
  \begin{enumerate}
    \item[(1)] if $k \leq n$, we have $\Int_T \phi=0, \, (d \theta \wedge )^{\adjoint} \phi = 0$,
    \item[(2)] if $k \geq n+1$, we have $\theta \wedge \phi=0, \, d \theta \wedge \phi = 0$,
    \item[(3)] we have $J\phi \in \Ker (\Delta_{\deRham})$, that is, $J \phi$ is also a harmonic form,
  \end{enumerate}
  where $\Int_T$ is the interior product with respect to $T$.
\end{corollary}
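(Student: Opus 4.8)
The plan is to treat (1) and (2) as a direct translation of Theorem \ref{theo:harmonic_form_coincides_with_Ker_Delta_E} into the explicit realization of the Rumin spaces inside the de Rham complex, and to deduce (3) from the bidegree decomposition \eqref{eq:Hodge-decomposition}. First I would recall the concrete description of $\RN^k(M) = \cE^k(M)$ as a subspace of $\Omega^k(M)$ induced by the metric $g_{\theta,J}$ (see \S\ref{sec: The Rumin complex}): writing $\Lambda = (d\theta \wedge)^{\adjoint}$, for $k \le n$ the space $\cE^k(M)$ is the space of horizontal primitive forms $\{\alpha \in \Omega^k(M) : \Int_T \alpha = 0, \ \Lambda \alpha = 0\}$, while for $k \ge n+1$ it is $\{\alpha \in \Omega^k(M) : \theta \wedge \alpha = 0, \ d\theta \wedge \alpha = 0\}$. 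With this in hand, (1) and (2) are nothing but the inclusion $\Ker \Delta_{\deRham} \subseteq \cE^k(M)$: by Theorem \ref{theo:harmonic_form_coincides_with_Ker_Delta_E}, any $\phi \in \Ker(\Delta_{\deRham} \colon \Omega^k \to \Omega^k)$ lies in $\Ker \Delta_{\RN} \subseteq \cE^k(M)$, and the defining conditions of $\cE^k(M)$ are exactly $\Int_T \phi = 0,\ (d\theta\wedge)^{\adjoint}\phi = 0$ when $k \le n$, and $\theta \wedge \phi = 0,\ d\theta \wedge \phi = 0$ when $k \ge n+1$. This yields (1) and (2) verbatim.

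For (3), I would invoke \eqref{eq:Hodge-decomposition}. By Theorem \ref{theo:harmonic_form_coincides_with_Ker_Delta_E} we have $\phi \in \Ker \Delta_{\RN}$, so \eqref{eq:Hodge-decomposition} furnishes a decomposition $\phi = \sum_{i+j=k} \phi^{i,j}$ with each component $\phi^{i,j} \in \Ker \Delta_{\RN} \cap C^{\infty}(M, \bigwedge^{i,j} H^{\dual})$. Since $J = \ii$ on the $(1,0)$-part and $J = -\ii$ on the $(0,1)$-part of $H^{\dual}$, the operator $J$ acts on $\bigwedge^{i,j} H^{\dual}$ as multiplication by a nonzero scalar (a power of $\ii$ determined by $i-j$); and because $J$ fixes the $\theta$-direction, this scalar action persists in the range $k \ge n+1$, where a $\theta$-factor is present. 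Hence $J\phi^{i,j}$ is a scalar multiple of $\phi^{i,j}$, so $J\phi^{i,j} \in \Ker \Delta_{\RN}$, and therefore $J\phi = \sum_{i+j=k} J\phi^{i,j} \in \Ker \Delta_{\RN} = \Ker \Delta_{\deRham}$ by linearity of the kernel together with Theorem \ref{theo:harmonic_form_coincides_with_Ker_Delta_E}. This is (3).

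Because both inputs are available, the argument is short, so the real care lies in the bookkeeping rather than in any single hard estimate. The step I expect to require the most attention is pinning down the precise metric realization of $\cE^k(M) \subset \Omega^k(M)$ so that its defining conditions match (1)–(2) on the nose (in particular that the primitivity operator is literally $(d\theta\wedge)^{\adjoint}$), and reconciling the horizontal bidegree decomposition \eqref{eq:Hodge-decomposition} with the high-degree range $k \ge n+1$, where the Rumin representatives carry a $\theta$-factor on which $J$ acts trivially. Once the scalar action of $J$ on each bidegree summand is correctly identified, the conclusion of (3) is immediate.
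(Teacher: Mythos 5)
Your proposal is correct and follows essentially the same route the paper intends: the paper derives Corollary \ref{cor:Tachi-Fuji} precisely by combining Theorem \ref{theo:harmonic_form_coincides_with_Ker_Delta_E} (which places every harmonic form inside $\cE^k(M)$, whose defining conditions are literally (1) for $k\le n$ and (2) for $k\ge n+1$, since $\Lambda=(d\theta\wedge)^{\adjoint}$ and $d\theta\wedge\theta\wedge\alpha=\pm\,\theta\wedge L\alpha=0$) with the bidegree decomposition \eqref{eq:Hodge-decomposition}, on whose summands $J$ acts by a scalar, giving (3). Your write-up simply makes explicit the details the paper leaves implicit.
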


The Bernstein-Gelfand-Gelfand sequence $(\cE^{\bullet} (M,E) , d_{\Rumin} )$ is defined for parabolic geometry on the twisted de Rham complex due to \u{C}ap-Slov\'{a}k-Sou\u{c}ek \cite{Cap-Slovak-Soucek-01}
and Calderbank-Dimer \cite{Calderbank-Dimer-01}.
Rumin has also introduced a non $G$-invariant version in the context of sub-Riemannian geometry \cite{Rumin-99},
which coincides with the Rumin complex \cite{Rumin-94} on contact manifolds (e.g. \cite[\S 5.3]{Rumin-05}).
To the author's knowledge, the Sasakian manifolds are the only cases when the kernel of $d_{\Rumin} + d_{\Rumin}^{\adjoint}$ agrees with the harmonic space.
It is an interesting question: whether on the filtered manifolds its kernel coincides with the harmonic space or not.

Next,
one can view the Rumin complex as arising naturally the sub-Riemannian limit of $\Delta_{\deRham}$ induced by the filtration $H \subset TM$ \cite{Rumin-00}.
An analytic approach to sub-Riemannian limit,
for fiber bundles, was developed by Mazzeo and Melrose \cite{Mazzeo-Melrose-90},
and,
for Riemann foliations, was by Forman
\cite{Forman-95}.
On contact manifolds, Albin-Quan solved the asymptotical equation of $\Delta_{\deRham}$, which was introduced by Forman \cite{Forman-95},
and its asymptotic behavior can be explicitly written by the Rumin complex \cite{Albin-Quan-20}.

Let $ t \in [0, \infty )$.
The exterior algebra of $M$ splits into horizontal and vertical forms.
With respect to this decomposition, the exterior differential writes
\[
	d
	=
	d_0 + d_b + d_T,
\]
where
\[
	d_0 := \Int_T d \theta \wedge ,
	\quad
	d_T := \theta \wedge \mathcal{L}_T ,
\]
and we define $d_b$ by
$d_b \theta := 0$
and
for $\phi \in C^{\infty} \left( M, \bigwedge^{\bullet} H^{\dual} \right)$
\[
	d_b \phi :=  d \phi - \theta \wedge (\Int_T d \phi )
.
\]
We set
\[
	d_t := d_0 + t d_b + t^2 d_T .
\]

Let $X := M \times [0, \infty )$ and $\Delta_{t} := d_t d_t^{\adjoint} + d_t^{\adjoint} d_t \colon \Omega^{\bullet} (M) \to \Omega^{\bullet} (M)$ for $t \in [0, \infty )$,
where $d_t^{\adjoint}$ is the formal adjoint of $d_t$ for the $L^2$-inner product on $\Omega^{\bullet} (M)$.
We define the space of the sub-Riemannian limit differential forms by
\begin{align*}
  &
	{}^{\suR} \Omega^k (X)
  \\
	&
  :=
  \left\{
    u_0 + t u_1 + \cdots + t^q u_q \in \Omega^k (X)
  \, \middle| \,
    u_j \in \Omega^k (M)
    , \,
    q \in \mathbb{Z}_{\geq 0}
		, \,
		t \geq 0
  \right\}
  ,
\end{align*}
and set
\begin{align*}
  \sF_p^k ( \Delta_{t} )
  & :=
  \left\{
    u \in \Omega^k (M)
    \,
  \middle|
	\,
    \exists \wt u  \in {}^{\suR} \Omega^k (X)
    \Mst
    \wt u \rest{t =0} = u,
    \,
    \Delta_{t} \wt u = O (t^p)
  \right\}
  ,
\end{align*}
for $p \geq 0$.
In \cite[p. 18]{Albin-Quan-20}, Albin-Quan showed that
\begin{align*}
	\sF_3^k (\Delta_{t} ) & = \Ker ( \Delta_{\RN} \colon \RN^k (M) \to \RN^k (M) )
	& \text{ for }k \not = n , n+1,
	\\
	\sF_5^k (\Delta_{t} ) & = \Ker ( \Delta_{\RN} \colon \RN^k (M) \to \RN^k (M) )
	& \text{ for } k = n , n+1.
\end{align*}
By Corollary \ref{cor:Tachi-Fuji},
we obtain the following:
\begin{corollary}
	\label{cor:Forman_spectral_seq_on_contact}
  In the setting of Theorem \ref{theo:harmonic_form_coincides_with_Ker_Delta_E},
	\[
		\Ker (\Delta_{\deRham} ) = \bigcap_{t>0} \Ker (\Delta_{t} ) .
	\]
\end{corollary}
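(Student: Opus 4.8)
The plan is to prove the two inclusions separately, and to observe that one of them is essentially free. Setting $t=1$ gives $d_1 = d_0 + d_b + d_T = d$, hence $\Delta_1 = \Delta_{\deRham}$, so any $\phi \in \bigcap_{t>0}\Ker(\Delta_t)$ lies in $\Ker(\Delta_1) = \Ker(\Delta_{\deRham})$; this already yields $\bigcap_{t>0}\Ker(\Delta_t) \subseteq \Ker(\Delta_{\deRham})$. All the content is therefore in the reverse inclusion, and the goal reduces to showing that every de Rham harmonic form $\phi$ of degree $k$ satisfies $\Delta_t\phi = 0$ for all $t>0$. Since $M$ is compact, $\Delta_t\phi = 0$ is equivalent to $d_t\phi = 0$ together with $d_t^{\adjoint}\phi = 0$, so it suffices to establish these two identities.

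The mechanism I would use is a weight grading on $\Omega^\bullet(M)$: assign weight $1$ to each covector in $H^{\dual}$ and weight $2$ to $\theta$, so that $C^\infty(M,\bigwedge^a H^{\dual})$ has weight $a$ while $\theta\wedge C^\infty(M,\bigwedge^a H^{\dual})$ has weight $a+2$. A direct computation from the definitions — using $\Int_T d\theta = 0$, $\mathcal{L}_T\theta = 0$, and Cartan's formula $\Int_T d\psi = \mathcal{L}_T\psi$ for horizontal $\psi$ — shows that $d_0$ preserves the weight, $d_b$ raises it by $1$, and $d_T$ raises it by $2$. Because the weight components are mutually $L^2$-orthogonal, taking formal adjoints of $d = d_0 + d_b + d_T$ shows that $d_0^{\adjoint}$ preserves the weight while $d_b^{\adjoint}$ and $d_T^{\adjoint}$ lower it by $1$ and $2$ respectively. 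Consequently, if $\phi$ is of pure weight, then $d\phi = 0$ forces $d_0\phi = d_b\phi = d_T\phi = 0$ (three terms of distinct weight cannot cancel), and likewise $d^{\adjoint}\phi = 0$ forces $d_0^{\adjoint}\phi = d_b^{\adjoint}\phi = d_T^{\adjoint}\phi = 0$.

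It remains to note that a de Rham harmonic form is automatically of pure weight, and this is exactly where the Sasakian hypothesis enters, through Corollary \ref{cor:Tachi-Fuji}. If $k \le n$, then $\Int_T\phi = 0$, so $\phi \in C^\infty(M,\bigwedge^k H^{\dual})$ is horizontal of pure weight $k$; if $k \ge n+1$, then $\theta\wedge\phi = 0$, so $\phi = \theta\wedge\psi$ for a horizontal $\psi$ and has pure weight $k+1$. In either case the previous paragraph gives $d_0\phi = d_b\phi = d_T\phi = 0$ and $d_0^{\adjoint}\phi = d_b^{\adjoint}\phi = d_T^{\adjoint}\phi = 0$, whence $d_t\phi = d_0\phi + t\,d_b\phi + t^2 d_T\phi = 0$ and, since the coefficients are real, $d_t^{\adjoint}\phi = d_0^{\adjoint}\phi + t\,d_b^{\adjoint}\phi + t^2 d_T^{\adjoint}\phi = 0$ for every $t$. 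Therefore $\Delta_t\phi = 0$ for all $t>0$, which completes the reverse inclusion; note that only the first halves of Corollary \ref{cor:Tachi-Fuji}(1),(2) are actually used.

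I expect the main obstacle to be purely bookkeeping: verifying the weight-shift behavior of $d_0$, $d_b$, $d_T$ and the orthogonality of the weight grading, both of which are elementary once the contact decomposition $\Omega^\bullet(M) = C^\infty(M,\bigwedge^\bullet H^{\dual})\oplus \theta\wedge C^\infty(M,\bigwedge^\bullet H^{\dual})$ is in place. No genuinely new analytic input is required beyond Corollary \ref{cor:Tachi-Fuji}; in particular the asymptotic results of Albin-Quan are not invoked here, because the identity $\Delta_t\phi = 0$ is exact in $t$ rather than merely asymptotic.
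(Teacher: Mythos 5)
Your proof is correct and takes essentially the same route as the paper: the paper also handles the inclusion $\bigcap_{t>0}\Ker(\Delta_t)\subset\Ker(\Delta_{\deRham})$ by noting $\Delta_1=\Delta_{\deRham}$, and for the converse it uses Corollary \ref{cor:Tachi-Fuji} together with the horizontal/vertical splitting to conclude $d_0\phi=d_b\phi=d_T\phi=d_0^{\adjoint}\phi=d_b^{\adjoint}\phi=d_T^{\adjoint}\phi=0$, hence $d_t\phi=d_t^{\adjoint}\phi=0$ for all $t$. Your weight grading is just a slightly more systematic phrasing of the paper's observation that $d_b\phi$ and $d_T\phi$ (and their adjoint counterparts) lie in orthogonal summands and so cannot cancel.
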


By Theorem \ref{theo:harmonic_form_coincides_with_Ker_Delta_E} and \cite[p. 18]{Albin-Quan-20},
we have
\begin{corollary}
	\label{cor:Forman_spectral_seq_on_contact_2}
  In the setting of Theorem \ref{theo:harmonic_form_coincides_with_Ker_Delta_E},
	\begin{align*}
		\sF_3^k (\Delta_{t} ) & = \bigcap_{t>0} \Ker (\Delta_{t} \colon \Omega^k (M) \to \Omega^k (M))
		& \text{ for }k \not = n , n+1,
		\\
		\sF_5^k (\Delta_{t} ) & = \bigcap_{t>0} \Ker (\Delta_{t} \colon \Omega^k (M) \to \Omega^k (M))
		& \text{ for } k = n , n+1.
	\end{align*}
\end{corollary}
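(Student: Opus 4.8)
The plan is to concatenate three equalities that are already available: the Albin-Quan identification of the stabilized filtration spaces with the Rumin kernel, Theorem~\ref{theo:harmonic_form_coincides_with_Ker_Delta_E} identifying the Rumin kernel with the Hodge-de Rham kernel, and Corollary~\ref{cor:Forman_spectral_seq_on_contact} identifying the Hodge-de Rham kernel with $\bigcap_{t>0}\Ker(\Delta_t)$.

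First, in the case $k \neq n, n+1$ I would quote \cite[p.~18]{Albin-Quan-20} to write $\sF_3^k(\Delta_t) = \Ker(\Delta_\RN \colon \RN^k(M) \to \RN^k(M))$; in the case $k = n, n+1$ the same reference gives $\sF_5^k(\Delta_t) = \Ker(\Delta_\RN)$, the jump from truncation order $3$ to $5$ reflecting that $D = d_\RN^n$ is a second-order operator in middle degree. Next I would apply Theorem~\ref{theo:harmonic_form_coincides_with_Ker_Delta_E} to rewrite $\Ker(\Delta_\RN)$ as $\Ker(\Delta_\deRham \colon \Omega^k(M) \to \Omega^k(M))$. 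Finally, Corollary~\ref{cor:Forman_spectral_seq_on_contact} turns this last kernel into $\bigcap_{t>0}\Ker(\Delta_t)$, which yields the claimed identity in each of the two degree ranges.

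Because every link in this chain has been established earlier, there is no remaining analytic difficulty; the statement is a formal consequence. The only point demanding attention is the degree bookkeeping: one must pair the truncation order $p=3$ with the degrees $k \neq n, n+1$ and the order $p=5$ with the middle degrees $k = n, n+1$, exactly as dictated by the Albin-Quan statement. I expect this to be the entire content of the argument, with no hidden obstacle.
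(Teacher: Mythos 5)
Your proposal is correct and is exactly the paper's argument: the corollary is obtained by chaining the Albin--Quan identification $\sF_3^k(\Delta_t)=\Ker(\Delta_{\RN})$ (resp.\ $\sF_5^k$ in middle degrees) with Theorem~\ref{theo:harmonic_form_coincides_with_Ker_Delta_E} and Corollary~\ref{cor:Forman_spectral_seq_on_contact}. The degree bookkeeping you flag ($p=3$ versus $p=5$) is the same as in the cited statement, so nothing further is needed.
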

It means that for ``$k \not = n , n+1$ and $ u \in \sF_3^k (\Delta_{t} )$''
or ``$k = n , n+1$ and $ u \in \sF_5^k (\Delta_{t} )$'', taking $\wt u = u$, we see
\[
	\Delta_{t} \wt u = 0 \text{ for } t >0.
\]
In \cite{Albin-Quan-20},
on compact contact manifolds,
for $u \in \Ker ( \Delta_{\RN} )$
Albin-Quan constructed $\wt u$ such that
``for $k \not = n , n+1$, $\Delta_t \wt u = O (s^3)$'' and ``for $k = n , n+1$, $\Delta_t \wt u = O (s^5)$''
by using $d_0, d_b ,d_T$.
On compact Sasakian manifolds,
we give a simple construction of $\wt u$.

We next introduce the analytic torsion and metric of the Rumin complex \\$(\cE ^\bullet (M,E) , d_\RN ^\bullet)$ by following \cite{Bismut-Zhang-92 , Kitaoka-19, Rumin-Seshadri-12}.
We define the {\em contact analytic torsion function} associated with $(\cE ^\bullet (M,E), d_\RN ^\bullet)$ by
\begin{equation*}
  \kappa_\RN ( M, E, g_{\theta , J}) (s)
  := \sum_{k=0}^{n} (-1)^{k+1} (n+1-k) \zeta (\Delta_\RN ^k ) (s)
  ,
\end{equation*}
where $\zeta (\Delta_\RN ^k ) (s)$ is the spectral zeta function of $\Delta_\RN ^k$
and the {\em contact analytic torsion} $ T_\RN $  by
\[
  2 \log T_\RN ( M , E, g_{\theta , J})
  =
  \kappa_\RN ( M , E, g_{\theta , J}) ' (0) .
\]
Let $H^\bullet (\cE ^\bullet (M,E), d_\RN ^\bullet)$ be the cohomology of the Rumin complex.
We define the contact metric on $\det H^\bullet (\cE ^\bullet (M,E) , d_\RN ^\bullet)$ by
\[
  \| \quad \|_\RN (M, E, g_{\theta , J} )
  =
  T_\RN^{-1} ( M , E, g_{\theta , J})
  | \quad |_{L^2 (\cE ^\bullet) },
\]
where
the metric
$| \quad |_{L^2 (\cE ^\bullet) }$ is induced by the $L^2$-metric on $\cE ^\bullet (M, E)$
via identification of the cohomology classes by the harmonic forms.

Rumin and Seshadri \cite{Rumin-Seshadri-12} defined another analytic torsion function $\kappa_{\Rumin}$ from\\
$(\cE ^\bullet (M,E) , d_{\Rumin }^\bullet )$, which is different from $\kappa_\RN $ except in dimension 3.
In dimension 3, they showed that
$\kappa_{\Rumin} (M, E, g_{\theta , J} ) (0)$ is a contact invariant,
that is, independent of the metric $g_{\theta , J}$.
Moreover, on Sasakian manifolds with $S^1$-action and dimension $3$, $\kappa_{\Rumin} (M, E, g_{\theta , J} ) (0) = 0 $.
Furthermore, for flat bundles with unimodular holonomy on Sasakian manifolds with $S^1$-action and dimension $3$, they showed that this analytic torsion and the Ray-Singer torsion $T_{\deRham } (M, E, g_{\theta , J} )$ are equal.
With this coincidence, they found a relation between the Ray-Singer torsion and holonomy.

To extend the coincidence,
with $d_{\RN}$ instead of $d_{\Rumin }$,
the author \cite{Kitaoka-20} showed that $T_{\RN } (K, E, g_{\theta , J} ) = n!^{\dim H^0 (K , E )} T_{\deRham } (K, E, g_{\theta , J} )$ on the standard lens space $K$ of dimension $2n+1$.
Moreover, Albin and Quan \cite{Albin-Quan-20} showed the difference between the Ray-Singer torsion and the contact analytic torsion is given by some integrals of universal polynomials in the local invariants of the metric
on contact manifolds.
Via any isomorphism $\Phi \colon \det H^{\bullet} (\cE^{\bullet} (M,E) , d_{\RN}^{\bullet}) \cong \det H^{\bullet} (\Omega^{\bullet} (M,E) , d) $,
\[
  \frac{\| \quad \|_{\cE}}{\| \Phi \quad \|_{\deRham}} = \frac{T_{\deRham}}{T_{\cE}} .
\]
By Theorem \ref{theo:harmonic_form_coincides_with_Ker_Delta_E}, we can choose $\Phi = \Id$ and
\[
  \frac{\| \quad \|_{\cE}}{\| \quad \|_{\deRham}} = \frac{T_{\deRham}}{T_{\cE}} .
\]

Finally, to adapt the proof of Theorem \ref{theo:harmonic_form_coincides_with_Ker_Delta_E}
to the contact analytic torsion, we express the analytic torsion function associated with the Rumin complex in terms of the Reeb vector field.
We set
\[
  2 \square_{\RN} := \sqrt{\Delta_{\RN}} + \ii \cL_T ,
  \quad
  2 \overline{\square}_{\RN} := \sqrt{\Delta_{\RN}} - \ii \cL_T .
\]
\begin{theorem}
  \label{theo:contact_torsion_Reeb_vector}
	Let $(M,H,\theta,J)$ be a compact Sasakian manifold of dimension $2n+1$,
	and $E$ be a flat vector bundle over $M$ associated with the unitary representation $\alpha \colon \pi_1 (M) \to \mathrm{U}(r)$.
	We assume that the universal covering of $M$ is compact.
	Then, we have
  \begin{align*}
    \kappa_{\RN} (s)
  & =
    \sum_{k=0}^{n} (-1)^{k+1} (n+1-k)
      \Bigl(
        \zeta ( -\mathcal{L}_T^2 |_{\Ker \square_{\RN } \cap \Ima \overline{\square}_{\RN } \cap \mathcal{E}^k (M,E) } ) (s)
  \\
  & \hspace{35mm}
        + \zeta ( -\mathcal{L}_T^2 |_{\Ima \square_{\RN } \cap \Ker \overline{\square}_{\RN} \cap \mathcal{E}^k (M,E) } ) (s)
        + \dim H^k (M ,E)
        \Bigr)
    ,
  \end{align*}
	where $\zeta$ is a spectral zeta function.
\end{theorem}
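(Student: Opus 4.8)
The plan is to deduce the formula from the joint spectral theory of $\Delta_{\RN}$ and the Reeb Lie derivative $\mathcal{L}_T$, treating separately the contributions indexed by the kernels and images of $\square_{\RN}$ and $\overline{\square}_{\RN}$. First I would verify that $\mathcal{L}_T$ commutes with the whole Rumin apparatus. On a Sasakian manifold $T$ is a Killing field with $\mathcal{L}_T J = 0$ and $\mathcal{L}_T \theta = \mathcal{L}_T d\theta = 0$, so $\mathcal{L}_T$ preserves the bigrading defining $\mathcal{E}^\bullet(M,E)$ and, since $[\mathcal{L}_T , d] = 0$, commutes with $d_\RN$, with its adjoint, and with $D$; hence it commutes with $\Delta_{\RN}$ and with $\sqrt{\Delta_{\RN}}$. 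As $\mathcal{L}_T$ is skew-adjoint on the $L^2$-completion of $\mathcal{E}^\bullet(M,E)$, the operators $\square_{\RN}$ and $\overline{\square}_{\RN}$ are commuting self-adjoint operators, and on a joint eigenform with $\Delta_{\RN} = \Lambda$ and $\mathcal{L}_T = \ii\mu$ they act by $\tfrac12(\sqrt{\Lambda} - \mu)$ and $\tfrac12(\sqrt{\Lambda} + \mu)$, while $-\mathcal{L}_T^2$ acts by $\mu^2$. The hypothesis that the universal cover is compact is used here to ensure that the Reeb flow is quasi-regular, so that $\mathcal{L}_T$ has discrete spectrum and $\mathcal{E}^k(M,E)$ admits a simultaneous eigenbasis for $(\Delta_{\RN} , \mathcal{L}_T)$; in particular the restricted zeta functions on the right-hand side are well defined.

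Next I would read off the four regions cut out by $\square_{\RN}$ and $\overline{\square}_{\RN}$. On $\Ker \square_{\RN} \cap \Ima \overline{\square}_{\RN}$ one has $\mu = \sqrt{\Lambda} > 0$, and on $\Ima \square_{\RN} \cap \Ker \overline{\square}_{\RN}$ one has $\mu = -\sqrt{\Lambda} < 0$; in both cases $-\mathcal{L}_T^2 = \mu^2 = \Lambda = \Delta_{\RN}$, so on these two pieces $\zeta(-\mathcal{L}_T^2|_\bullet)(s) = \zeta(\Delta_{\RN}^k|_\bullet)(s)$ eigenvalue by eigenvalue. These reproduce exactly the first two summands of the asserted formula. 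The complement consists of $\Ker\square_{\RN}\cap\Ker\overline{\square}_{\RN} = \Ker\sqrt{\Delta_{\RN}}\cap\Ker\mathcal{L}_T$, which by Theorem \ref{theo:harmonic_form_coincides_with_Ker_Delta_E} (together with the fact that harmonic forms are basic) equals the harmonic space $\Ker \Delta_{\RN}$, and the ``off-diagonal'' modes $\Ima\square_{\RN}\cap\Ima\overline{\square}_{\RN}$ where $0 \le \mu^2 < \Lambda$. Thus the theorem reduces to the claim that, in the weighted alternating sum defining $\kappa_{\RN}$, these off-diagonal modes together with the harmonic bookkeeping contribute precisely $\sum_{k=0}^{n} (-1)^{k+1}(n+1-k)\dim H^k(M,E)$.

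The mechanism I would use for this is a weighted McKean--Singer cancellation adapted to the $\mathcal{L}_T$-grading. Writing $w_k = (-1)^{k+1}(n+1-k)$ and using $\zeta(\Delta_{\RN}^k)(s) = \frac{1}{\Gamma(s)}\int_0^\infty t^{s-1}\left(\Tr e^{-t\Delta_{\RN}^k} - \dim H^k\right)dt$, the point is that $e^{-t\Delta_{\RN}}$ commutes with $\mathcal{L}_T$ and hence preserves each eigenvalue $\mu$ of $\mathcal{L}_T$, while $d_\RN$ (and $D$ in the middle) makes every positive $\Delta_{\RN}$-eigenspace an acyclic subcomplex within a fixed $\mu$. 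The rescaling constants $a_k = 1/\sqrt{|n-k|}$ are designed precisely so that the Rumin complex satisfies K\"{a}hler-type identities \cite{Rumin-00}; I would use these, together with the transverse Lefschetz $\mathfrak{sl}_2$-action of $d\theta \wedge$ and the contact duality $\mathcal{E}^k \cong \mathcal{E}^{2n+1-k}$, to show that the weighted alternating supertrace $\sum_k w_k \Tr(e^{-t\Delta_{\RN}^k})$ restricted to the off-diagonal modes vanishes identically in $t$, so that after integration these modes drop out of $\kappa_{\RN}$. The surviving harmonic contribution, reinstated through the $-\dim H^k$ appearing in each zeta integral, is exactly what produces the stated $\sum_k w_k \dim H^k(M,E)$.

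The step I expect to be the main obstacle is the middle degree. Away from $k = n, n+1$ the differential $d_\RN$ is first order and $\sqrt{\Delta_{\RN}}$ coincides with the second-order Rumin--Hodge Laplacian $d_\RN d_\RN^{\adjoint} + d_\RN^{\adjoint} d_\RN$, so the acyclic-complex cancellation is uniform; but at $k = n$ the operator $D$ is second order, $\sqrt{\Delta_{\RN}}$ is no longer of a single type on exact and coexact forms, and the naive pairing across degrees degenerates. Making the weighted supertrace cancellation exact there forces one to treat the central two degrees by hand and to use the specific weight $(n+1-k)$ and the constant $a_n = 1$ to absorb the second-order jump. A secondary difficulty is to confirm that the off-diagonal cancellation is genuinely pointwise in $t$ (equivalently, that the residual is truly the cohomological constant and not merely equal to it after analytic continuation); this in turn rests on the compatibility of the Lefschetz and duality structures with the $(\Delta_{\RN}, \mathcal{L}_T)$-bigrading established in the first step.
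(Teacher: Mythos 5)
Your opening moves coincide with the paper's: the joint diagonalization of $\Delta_{\RN}$ and $\cL_T$, the identification of $\square_{\RN}$ and $\overline{\square}_{\RN}$ with the commuting nonnegative operators $\Delta_{\overline{\partial}_{\RN}}$ and $\Delta_{\partial_{\RN}}$ via Proposition \ref{prop:commuteness_holomorphic_Rumin_derivative}, and the observation that $-\cL_T^2=\Delta_{\RN}$ on $\Ker\square_{\RN}\cap\Ima\overline{\square}_{\RN}$ and on $\Ima\square_{\RN}\cap\Ker\overline{\square}_{\RN}$ (Proposition \ref{prop:isomorphim_among_eigenspaces_of_the_Rumin_Laplacian_Ker_Im} and \eqref{eq:isomorphim_among_eigenspaces_of_the_Rumin_Laplacian_Ker_Ker}), which produce the two zeta terms. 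But the substance of the theorem is exactly the part you defer: showing that the $\Ima\square_{\RN}\cap\Ima\overline{\square}_{\RN}$ modes drop out of the weighted alternating sum. You propose a Lefschetz/duality supertrace cancellation and explicitly flag the middle degree as an unresolved obstacle; the paper's proof \emph{is} the resolution of that obstacle, carried out in \S\ref{sec:Eigenvectors of the Rumin Laplacian}. Concretely, Proposition \ref{prop:isomorphim_among_eigenspaces_of_the_Rumin_Laplacian_Im_Im} exhibits $\partial_{\RN}$ and $\overline{\partial}_{\RN}$ as isomorphisms between the four pieces of each joint eigenspace $\cQ^k(\lambda_{10},\lambda_{01})$, so that $\dim\cQ^k=m_k+2m_{k-1}+m_{k-2}$ with $m_k=\dim\bigl(\cQ^k\cap\Ima\partial_{\RN}^{\adjoint}\cap\Ima\overline{\partial}_{\RN}^{\adjoint}\bigr)$, and the coefficient of each $m_j$ in $\sum_{k}(-1)^{k+1}(n+1-k)\dim\cQ^k$ vanishes; and the computations \eqref{eq:isomorphim_among_eigenspaces_of_the_Rumin_Laplacian_Im_Im_3}--\eqref{eq:isomorphim_among_eigenspaces_of_the_Rumin_Laplacian_Im_Im_4} show that the $\Delta_{\RN}$-eigenvalue is $(\lambda_{10}+\lambda_{01})^2$ on \emph{every} piece, including on $(d_{\RN}W)^{\bot}\subset\cE^n$ where $D$ is second order --- the step that needs the Sasakian identity $\lambda_T=\lambda_{01}-\lambda_{10}$ to reduce $(A^2\lambda_{01}+B^2\lambda_{10})/(\lambda_{10}+\lambda_{01})$ to $(\lambda_{10}+\lambda_{01})^2$. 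Without this, your claim that the off-diagonal weighted supertrace "vanishes identically in $t$" is an assertion, not a proof, and it is precisely the middle-degree case you acknowledge you cannot handle.

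Two further points. First, your bookkeeping for the constant $\dim H^k(M,E)$ does not hold together: with the convention $\zeta(\Delta_{\RN}^k)(s)=\frac{1}{\Gamma(s)}\int_0^\infty t^{s-1}\bigl(\Tr e^{-t\Delta_{\RN}^k}-\dim H^k\bigr)\,dt$ that you adopt, harmonic forms contribute nothing to $\zeta(\Delta_{\RN}^k)$, and $-\cL_T^2$ is strictly positive on the two subspaces appearing on the right-hand side, so the constant cannot be "reinstated through the $-\dim H^k$ in each zeta integral"; you must track explicitly the convention under which the zero modes enter the identity. Second, a joint eigenbasis for $(\Delta_{\RN},\cL_T)$ already exists because $\cL_T$ is skew-adjoint and preserves the finite-dimensional eigenspaces of $\Delta_{\RN}$ (Proposition \ref{eq:sub-elliptic-Rumin_Laplacian}); no quasi-regularity of the Reeb flow is needed for that, so your proposed use of the compact-universal-cover hypothesis does not match what it is actually for (extending the \S\ref{sec:Eigenvectors of the Rumin Laplacian} analysis to the flat bundle $E$).
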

On Sasakian manifolds with $S^1$-action and dimension $3$,
Rumin-Seshadri \cite[(4.6)]{Rumin-Seshadri-12} showed the theorem above.
By using this formula,
they showed that this analytic torsion and the Ray-Singer torsion $T_{\deRham } (M, E, g_{\theta , J} )$ are equal for flat bundles with unimodular holonomy on Sasakian manifolds with $S^1$-action and dimension $3$
\cite{Rumin-Seshadri-12}.

The paper is organized as follows.
In \S \ref{sec: The Rumin complex}, we recall the definition and properties
of the Rumin complex on contact manifolds.
In \S \ref{sec: Some properties on Sasakian manifolds}, we recall some properties of the Rumin complex on Sasakian manifolds.
In \S \ref{sec:Eigenvectors of the Rumin Laplacian},
we decompose $\cE^k (M)$ as a direct sum with respect to simultaneous diagonalization of $\square_{\RN}$ and $\overline{\square}_{\RN}$, and we see the Rumin Laplacian $\Delta_{\RN}$ on them.
In \S \ref{ref:Comparison to the Hodge-de Rham Laplacian and the Rumin Laplacian}, we show the kernel of $\Delta_{\RN}$ agrees with the space of harmonic forms.
In \S \ref{sec:Forman_spectral_sequence},
we show the space of harmonic forms coincides with $\sF_\bullet^k (\Delta_{t} )$ when it converges.
In \S \ref{sec:contact torsion}, we express the analytic torsion function associated with the Rumin complex in terms of the Reeb vector field.

\subsection*{Acknowledgement}

The author is very grateful to his supervisor Professor Kengo Hirachi for introducing him to this subject and for helpful comments.
This work was supported by the program for Leading Graduate Schools, MEXT, Japan.

\section{The Rumin complex on contact manifolds}
\label{sec: The Rumin complex}

We call $(M,H)$ an orientable contact manifold of dimension $2n+1$
if $H$ is a subbundle of $TM$ of codimension $1$
and
there exists a 1-form $\theta$,
called a contact form,
such that
 $\Ker (\theta \colon TM \to \bbR ) = H $
and $\theta \wedge ( d \theta )^n \not = 0$.
The Reeb vector field of $\theta$ is the unique vector field $T$ satisfying $\theta (T) =1$ and $\Int_T d \theta =0$, where $\Int_T$ is the interior product with respect to $T$.

For $H$ and $\theta$,
we call $J \in \End (TM)$
an almost complex structure associated with $\theta$ if
$J^2 = - \Id $ on $H$, $JT=0$, and the Levi form $d \theta ( - , J - )$ is positive definite on $H$.
Given $\theta $ and $J$, we define a Riemannian metric $g_{\theta , J}$ on $TM$ by
\begin{align*}
g_{\theta , J} (X,Y) := d \theta (X , JY) + \theta (X) \theta (Y) \hspace{1cm} \text{ for } X,Y \in TM .
\end{align*}
Let $*$ be the Hodge star operator on $\bigwedge^\bullet T^*M$ with respect to $g_{\theta , J}$.

The Rumin complex \cite{Rumin-94} is defined on contact manifolds as follows.
We set $L := d \theta \wedge $
and $\Lambda := *^{-1} L *$,
which is the adjoint operator of $L$ with respect to the metric $g_{\theta , J}$ at each point.
We set
\[
  \bigwedge_{\prim}^{k} H^{\dual} :=
  \left\{
  v \in \bigwedge^{k} H^{\dual}
  \, \middle| \,
  \Lambda v = 0
  \right\},
  \quad \quad
  \bigwedge_{L}^{k} H^{\dual}
  :=
  \left\{
  v \in \bigwedge^{k} H^{\dual}
  \, \middle| \,
  L v = 0
  \right\},
\]
\[
\cE ^k (M )
:=
\left\{
\begin{aligned}
& C^\infty \left( M,\bigwedge_{\prim}^{k} H^{\dual}  \right) , \quad && k \leq n ,\\
& C^\infty \left( M, \theta \wedge \bigwedge_{L}^{k-1} H^{\dual}  \right) , && k \geq n+1 .
\end{aligned}
\right.
\]
We embed $H^{\dual}$ into $T^{\dual}M$ as the subbundle $ \{ \phi \in T^* M \mid \phi (T) = 0 \}$ so that
we can regard
\begin{align*}
\Omega_H^{k} (M) := & \, C^{\infty} \left( M, \bigwedge^{k} H^{\dual}  \right)
\end{align*}
as a subspace of $\Omega^k (M)$, the space of $k$-forms.
We define $d_b \colon \Omega_H^{k} (M) \to \Omega_H^{k+1} (M)$ by
\[
d_b \phi := \, d \phi - \theta \wedge (\Int_T d \phi )
,
\]
and then $D \colon \cE ^n (M) \to \cE ^{n+1} (M) $ by
\begin{equation}
  D = \theta \wedge ( \mathcal{L}_T + d_b L^{-1} d_b ),
  \label{eq:D-0}
\end{equation}
where $\mathcal{L}_T$ is the Lie derivative with respect to $T$,
and we use the fact that $L \colon \bigwedge^{n-1} H^{\dual} \\ \to\bigwedge^{n+1} H^{\dual}$ is an isomorphism.

Let $P \colon \bigwedge^{k} H^{\dual} \to \bigwedge_{\prim}^{k} H^{\dual} $ be the fiberwise orthogonal projection with respect to $g_{\theta , J}$, which also defines a projection $P \colon \Omega^k (M) \to \cE ^k (M) $.
We set
\[
d_{\Rumin }^k
:=
\begin{cases}
P \circ d & \text{ on } \cE ^k (M), \quad k \leq n-1, \\
D & \text{ on } \cE ^n (M) , \\
d & \text{ on } \cE ^k (M) , \quad k \geq n+1.
\end{cases}
\]
Then $(\cE ^\bullet (M), d_{\Rumin }^\bullet)$ is a complex.
Let $d_\RN ^{k} = a_k d_{\Rumin }^{k}$, where
$a_k = 1 / \sqrt{|n-k|}$ for $k \not = n$ and
$a_n = 1$.
We call $(\cE ^\bullet (M), d_\RN ^\bullet)$ the Rumin complex.

\begin{proposition}{\rm (\cite[p. 286--287]{Rumin-94})}
\label{prop:BGG_coh_agrees_with_deRahm_coh}

Let $M$ be an orientable contact manifold.
Then, the Rumin complex forms a fine resolution of the constant sheaf $\mathbb{R}$.
Hence its cohomology coincides with the de Rham cohomology of $M$.
\end{proposition}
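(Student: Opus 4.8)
Since the rescaling constants $a_k$ are all nonzero, the complexes $(\cE^\bullet (M), d_\RN^\bullet)$ and $(\cE^\bullet (M), d_{\Rumin}^\bullet)$ share the same kernels, images and cohomology, so I would work throughout with $d_{\Rumin}$. The statement has two parts: that each $\cE^k$ yields a fine sheaf, and that the augmented complex is a resolution of $\bbR$. Fineness is immediate, since the germs of smooth sections of the bundles $\bigwedge^k_{\prim} H^{\dual}$ (for $k \leq n$) and $\theta \wedge \bigwedge^{k-1}_L H^{\dual}$ (for $k \geq n+1$) form a sheaf of modules over the structure sheaf $C^{\infty}_M$, which admits partitions of unity; the associated sheaves are therefore fine and hence acyclic. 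The real content is the local exactness of
\[
  0 \to \bbR \to \cE^0 \to \cE^1 \to \cdots \to \cE^{2n+1} \to 0 ,
\]
which I would check on stalks in a Darboux chart, where $(M,H,\theta)$ is modeled on the standard contact structure of $\bbR^{2n+1}$.

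The strategy is Rumin's comparison with the de Rham complex through the algebra of $L = d\theta \wedge$. I would introduce the differential ideal $\cI^\bullet \subset \Omega^\bullet$ generated by $\theta$ and $d\theta$, and the subcomplex $\cJ^\bullet := \{ \alpha \in \Omega^\bullet : \theta \wedge \alpha = 0, \; \theta \wedge d\alpha = 0 \}$. Using the fiberwise projection $P$ and the Lefschetz decomposition one identifies $\cE^k \cong \Omega^k / \cI^k$ for $k \leq n$ and $\cE^k \cong \cJ^k$ for $k \geq n+1$; under these identifications $d_{\Rumin}$ becomes the differential induced by $d$ away from the middle degree, and becomes the second-order operator $D$ at the junction $k=n$. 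The algebraic engine is the $\mathfrak{sl}_2$-representation on $\bigwedge^\bullet H^{\dual}$ spanned by $L$, $\Lambda$ and the degree-counting operator: hard Lefschetz gives that $L^{n-k} \colon \bigwedge^{k} H^{\dual} \to \bigwedge^{2n-k} H^{\dual}$ is an isomorphism for $k \leq n$, so in particular $L \colon \bigwedge^{n-1} H^{\dual} \to \bigwedge^{n+1} H^{\dual}$ is invertible, which is exactly what makes $D = \theta \wedge ( \cL_T + d_b L^{-1} d_b )$ well defined.

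For the main steps I would first show that the quotient complex $\Omega^\bullet / \cI^\bullet$ computes de Rham cohomology in degrees $\leq n$ and that the inclusion $\cJ^\bullet \hookrightarrow \Omega^\bullet$ is a quasi-isomorphism in degrees $\geq n+1$. Both reduce to the contractibility of the error complexes $\cI^\bullet$ and $\Omega^\bullet / \cJ^\bullet$ in the respective ranges, which in turn follows from the injectivity of $L$ below the middle degree and its surjectivity above it; concretely, one writes explicit algebraic homotopy operators built from $\Lambda$ and $L^{-1}$ and verifies $dh + hd = \Id$ on the error complexes, the primitive decomposition being the key input. I would then glue the two halves across the middle degree, checking that $D$ is precisely the connecting differential rendering the spliced complex quasi-isomorphic to $(\Omega^\bullet, d)$. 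Combined with the Poincar\'e lemma for $d$ in the chart, this gives exactness of $\cE^\bullet$ in positive degrees; in degree $0$ the equation $d_{\Rumin}^0 f = P\, d_b f = 0$ forces $d_b f = 0$, and bracket-generation of the contact distribution $H$ then forces $f$ to be locally constant, so the kernel at $\cE^0$ is exactly $\bbR$.

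Once local exactness is established, fineness upgrades it to a fine resolution, and the cohomology of the complex of global sections $(\cE^\bullet(M), d_{\Rumin}^\bullet)$ computes $H^\bullet(M,\bbR)$, i.e.\ the de Rham cohomology. The main obstacle I anticipate is the middle-degree gluing: away from degree $n$ the operator $L$ is injective or surjective and the homotopies are routine, but at $k=n$ it is neither, and one must invoke the isomorphism $L \colon \bigwedge^{n-1} H^{\dual} \to \bigwedge^{n+1} H^{\dual}$ to invert $L$ and produce the second-order operator $D$. Verifying that this $D$ patches the quotient complex to the subcomplex compatibly with the homotopies — equivalently, that no cohomology is lost or created at the junction — is the delicate point, and it is exactly where the second-order nature of the Rumin complex genuinely enters.
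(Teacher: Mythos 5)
Your outline is essentially Rumin's original argument, which is exactly what the paper invokes here (the proposition is stated with a citation to \cite{Rumin-94} and no independent proof is given): identify $\cE^k$ with $\Omega^k/\cI^k$ below the middle degree and with $\cJ^k$ above it, use the Lefschetz $\mathfrak{sl}_2$-algebra to contract the error complexes, and let the second-order operator $D$ splice the two halves. The approach and the key points you flag (invertibility of $L\colon\bigwedge^{n-1}H^{\dual}\to\bigwedge^{n+1}H^{\dual}$, the middle-degree gluing, bracket-generation in degree $0$) are correct and match the cited source.
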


We define the $L^2$-inner product on $\Omega^{k} (M)$ by
\[
(\phi , \psi ) := \int_M g_{\theta , J} (\phi , \psi ) d \vol_{g_{\theta , J}}
\]
and the $L^2$-norm on $\Omega^k (M)$ by $\| \phi \| := \sqrt{(\phi , \phi )}$.
Since the Hodge star operator $*$ induces a bundle isomorphism
from $ \bigwedge_{\prim}^{k} H^{\dual}$ 
to $\theta \wedge \bigwedge_{L}^{2n-k} H^{\dual}$,
it also induces a map $\cE ^{k} (M ) \to \cE ^{2n+1-k} (M) $.

Let $d_\RN ^{\adjoint }$ and $D^{\adjoint }$ denote the formal adjoint of $d_\RN $ and $D$, respectively for the $L^2$-inner product.
We define the fourth-order Laplacians $\Delta_\RN $ on $\cE ^k (M )$ by
\[
\Delta_\RN ^k
 :=
\begin{cases}
( d_\RN ^{k-1} d_\RN ^{k-1} {}^{\adjoint })^2 + (d_\RN ^{k} {}^{\adjoint } d_\RN ^{k} )^2 , & k \not = n , n+1 , \\
\vspace{1mm}
( d_\RN ^{n-1} d_\RN ^{n-1} {}^{\adjoint })^2 + D^{\adjoint } D , & k=n, \\
D D^{\adjoint } + (d_\RN ^{n+1} {}^{\adjoint } d_\RN ^{n+1} )^2 , & k =n+1.
\end{cases}
\]
We call it the Rumin Laplacian \cite{Rumin-94}.

We recall the Rumin Laplacian $\Delta_{\RN}$ has Rockland condition or hypoellipticity (e.g. \cite[\S 3]{Rumin-94}, \cite[Proposition 3.5.8]{Ponge-08}).
For example, \cite[Propositions 5.5.2, 5.5.9]{Ponge-08} follows the following proposition:
\begin{proposition}
	\label{eq:sub-elliptic-Rumin_Laplacian}
	Let $M$ be a compact orientable contact manifold of dimension $2n+1$.
	Then,
	\begin{enumerate}
		\item[(1)]
		the Rumin Laplacian $\Delta_{\RN}$ has discrete eigenvalues,
		\item[(2)]
		the kernel of $\Delta_{\RN}$ is isomorphic to the cohomology of the Rumin complex as vector space, that is,
		\[
		\Ker (\Delta_{\RN}) \cong H^{\bullet} (\RN^{\bullet}(M) , d_{\RN}).
		\]
	\end{enumerate}
\end{proposition}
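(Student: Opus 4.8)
The plan is to treat $\Delta_{\RN}$ as a nonnegative, formally self-adjoint operator that is hypoelliptic in the Heisenberg pseudodifferential calculus, and then to extract both statements from the existence of a parametrix together with Hodge theory. Since the rescaling $d_{\RN}^k = a_k d_{\Rumin}^k$ multiplies each $d_{\Rumin}^k$ by a positive constant, it alters $\Delta_{\RN}$ only by positive factors degree by degree and leaves all relevant kernels unchanged, so one may argue with $\Delta_{\RN}$ directly. The starting point is the Rockland condition recalled above: in the Heisenberg calculus the model operator of $\Delta_{\RN}$ at each point is injective on the relevant representation spaces, which is exactly the hypothesis needed to invoke Ponge's parametrix construction \cite[Propositions 5.5.2, 5.5.9]{Ponge-08}.

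For part (1), I would first observe that $\Delta_{\RN}$ is nonnegative: in each degree it is a sum of nonnegative self-adjoint operators (for $k=n,n+1$ the summand $D^{\adjoint}D$ or $DD^{\adjoint}$ is manifestly nonnegative), so $(\Delta_{\RN}\phi,\phi)\geq 0$ for all $\phi\in\cE^k(M)$. Because $M$ is compact and $\Delta_{\RN}$ admits a parametrix in the Heisenberg calculus, the resolvent $(\Delta_{\RN}+1)^{-1}$ exists and is compact on $L^2(\cE^k(M))$. The spectral theorem for self-adjoint operators with compact resolvent then yields a discrete spectrum accumulating only at $+\infty$, with finite-dimensional eigenspaces; this is precisely (1).

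For part (2), I would set up Hodge theory for the complex. The nonnegativity computation shows that $\Delta_{\RN}^k\phi=0$ forces each summand to vanish, hence $\phi$ lies in the kernel of both the outgoing differential and the incoming adjoint; thus $\Ker(\Delta_{\RN}^k)$ coincides with the harmonic space $\Ker d_{\RN}^k\cap\Ker(d_{\RN}^{k-1})^{\adjoint}$, with $D,D^{\adjoint}$ replacing the first-order operators in middle degree. The parametrix also gives that $\Delta_{\RN}$, and hence $d_{\RN}$, has closed range, so one obtains the orthogonal Hodge decomposition
\[
  \cE^k(M) = \Ker(\Delta_{\RN}^k) \oplus \Ima d_{\RN}^{k-1} \oplus \Ima (d_{\RN}^{k})^{\adjoint}.
\]
This identifies $\Ker(\Delta_{\RN}^k)$ with $H^k(\cE^\bullet(M),d_{\RN})$, and together with Proposition \ref{prop:BGG_coh_agrees_with_deRahm_coh} (the Rumin complex is a fine resolution of $\mathbb{R}$) establishes (2).

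The main obstacle is the analytic input underlying everything: verifying that $\Delta_{\RN}$ genuinely satisfies the Rockland condition and deducing from it the parametrix, the compactness of the resolvent, and the closed-range property. This is most delicate in middle degree $k=n,n+1$, where $\Delta_{\RN}$ is a fourth-order operator (the second-order $D$ entering through $D^{\adjoint}D$ or $DD^{\adjoint}$) rather than a sum of two squared first-order Laplacians, so the weighted Heisenberg order bookkeeping must be done carefully. I would handle this step entirely by appealing to Ponge's calculus, in which the Rumin Laplacian figures as a guiding example, rather than reproving the symbolic estimates here.
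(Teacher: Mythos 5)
Your proposal is correct and follows essentially the same route as the paper, which simply cites the Rockland condition/hypoellipticity of $\Delta_{\RN}$ and Ponge's Heisenberg-calculus results \cite[Propositions 5.5.2, 5.5.9]{Ponge-08} to obtain both the discreteness of the spectrum and the Hodge-theoretic identification of the kernel with the cohomology. You have merely spelled out the standard parametrix-to-compact-resolvent-to-Hodge-decomposition chain that those citations encapsulate, with the same reliance on the external analytic input.
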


\section{The Rumin complex on the Sasakian manifolds}
\label{sec: Some properties on Sasakian manifolds}

For a contact manifold $(M, H )$ and an almost complex structure $J$, we decompose the bundles defined in the previous subsection as follows:
\begin{align*}
H^{1,0} &:= \{ v \in \bbC H  \mid Jv = \sqrt{-1} v \} , &
H^{0,1}& := \{ v \in \bbC H \mid Jv = - \sqrt{-1} v \}, \\
\bigwedge^{i, j} H^{\dual} & :=  \bigwedge^{i} H^{\dual 1,0} \otimes \bigwedge^{j} H^{\dual 0,1} , &
\bigwedge_{\prim}^{i, j} H^{\dual} & :=  \left\{ \phi \in \bigwedge^{i, j} H^{\dual} \, \middle| \, \Lambda \phi =0 \right\} , \\
\Omega_H^{i, j}& := C^{\infty} \left( M, \bigwedge^{i, j} H^{\dual} \right) . &
%
%
\end{align*}
We call $(M, H, \theta , J )$ a {\em Sasakian manifold}
if $\mathcal{L}_T J =0$ and
\begin{align*}
		[C^{\infty} (M, H^{1,0}) , C^{\infty} (M, H^{1,0})]
	\subset
		C^{\infty} (M, H^{1,0}).
\end{align*}
Then $d_b \Omega_H^{i , j} \subset \Omega_H^{i+1 , j} \oplus \Omega_H^{i , j+1} $.
We define $\partial_b \colon \Omega_H^{i , j}$
$\to$
$\Omega_H^{i+1 , j} $ and $\overline{\partial}_b \colon \Omega_H^{i , j} \to \Omega_H^{i , j+1 } $ by
\[
d_b = \partial_b + \overline{\partial}_b.
\]
We set
\begin{align*}
	\partial_{\Rumin }
	& :=
	\begin{cases}
		P \partial_b , &
		k \leq n-1 ,
		\\
		\partial_b , &
		k \geq n,
	\end{cases}
	&
	\overline{\partial}_{\Rumin }
	& :=
	\begin{cases}
		P \overline{\partial}_b , &
		k \leq n-1,
		\\
		\overline{\partial}_b , &
		k \geq n,
	\end{cases}
	\\
	\partial_{\RN }
	& :=
		a_k \partial_{\Rumin }
		,
	&
	\overline{\partial}_{\RN }
	& :=
		a_k \overline{\partial}_{\Rumin }
		.
\end{align*}
Similarly, we decompose on $\cE^k (M)$ for $k < n $
\[
d_{\Rumin} = \partial_{\Rumin} + \overline{\partial}_{\Rumin}
,
\quad
d_\RN  = \partial_\RN  + \overline{\partial}_\RN
.
\]
Since
\begin{align}
    \partial_b^{\adjoint }
  =
    \sqrt{-1} [ \Lambda , \overline{\partial}_b ] ,
    \,
    \overline{\partial}_b^{\adjoint }
  =
    - \sqrt{-1} [ \Lambda , \partial_b ]
    ,
    \,
    \partial_b
  =
    \sqrt{-1} [ L , \overline{\partial}_b^{\adjoint } ] ,
  \,
    \overline{\partial}_b
  =
    - \sqrt{-1} [ L , \partial_b^{\adjoint } ]
    ,
  \label{eq:adjoint_formula_of_del_b_and_del_ber_b}
\end{align}
we may rewrite \eqref{eq:D-0} as
\begin{equation}
D = \theta \wedge \left( \mathcal{L}_T - \sqrt{-1} (\partial_b + \overline{\partial}_b )
( \partial_b^{\adjoint } - \overline{\partial}_b^{\adjoint } )\right)
.
\label{eq:D-1}
\end{equation}
We set
\[
  \Delta_{\overline{\partial}_{b }} := \overline{\partial}_{b } \overline{\partial}_{b }^{\adjoint } + \overline{\partial}_{b }^{\adjoint } \overline{\partial}_{b }
  , \quad
  \Delta_{\partial_{b }} := {\partial_{b }} \partial_{b }^{\adjoint } + \partial_{b }^{\adjoint } {\partial_{b }}
  .
\]
We note that
\begin{equation}
  %
  [\partial_b , \overline{\partial}_b^{\adjoint } ] = 0 ,
  \quad
  [\overline{\partial}_b , \partial_b^{\adjoint } ] = 0 .
  \label{prop:adjoint_formula_of_del_b_and_del_ber_b_2}
\end{equation}

\section{Eigenvectors of the Rumin Laplacian}
\label{sec:Eigenvectors of the Rumin Laplacian}

Henceforth,
we assume that $(M,H,\theta,J)$ is a compact Sasakian manifold of dimension $2n+1$.
Since $*$ and $\Delta_\RN $ commute,
to determine the eigenvalues on $\cE ^\bullet (M ) $,
it is enough to compute them on $\cE ^{k } (M)$ for $k \leq n$.
We set
\[
  \Delta_{\overline{\partial}_{\RN }} := \overline{\partial}_{\RN } \overline{\partial}_{\RN }^{\adjoint } + \overline{\partial}_{\RN }^{\adjoint } \overline{\partial}_{\RN }
  , \quad
  \Delta_{\partial_{\RN }} := {\partial_{\RN }} \partial_{\RN }^{\adjoint } + \partial_{\RN }^{\adjoint } {\partial_{\RN }}
  .
\]
We recall the differential operators $\partial_{\RN }$
and $\overline{\partial}_{\RN }$ have a property as 	\eqref{prop:adjoint_formula_of_del_b_and_del_ber_b_2}, i.e., as the following proposition:
\begin{proposition}
  {\rm (\cite[(34)]{Rumin-00})}
	\label{prop:commuteness_holomorphic_Rumin_derivative}

  \begin{enumerate}
    \item[(1)]
    	On $\mathcal{E}^k(M)$, for $k \leq n$, we have
      \[
      	[\partial_{\RN }, \overline{\partial}_{\RN }^{\adjoint } ] =0 ,
        \quad
      	[\overline{\partial}_{\RN }, \partial_{\RN }^{\adjoint } ] =0 .
      \]
    \item[(2)]
      For $k \leq n-1$,
      we have
      \[
      \sqrt{\Delta_{\RN }}
      =
      \Delta_{\overline{\partial}_{\RN }} + \Delta_{\partial_{\RN }}
      .
      \]
    \item[(3)]
    For $k \leq n-1$, we have
    \[
      \sqrt{-1} \mathcal{L}_T
      =
       \Delta_{\overline{\partial}_{\RN }} - \Delta_{\partial_{\RN }} .
    \]
  \end{enumerate}
\end{proposition}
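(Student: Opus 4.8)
The plan is to reduce all three identities to the horizontal operators $\partial_b,\overline\partial_b$, for which \eqref{eq:adjoint_formula_of_del_b_and_del_ber_b} and \eqref{prop:adjoint_formula_of_del_b_and_del_ber_b_2} already hold, while carefully tracking the orthogonal projection $P$ onto primitive forms and the rescaling $a_k=1/\sqrt{n-k}$. I shall use throughout the $\mathfrak{sl}_2$-relations on $\bigwedge^\bullet H^{\dual}$: namely $[L,\partial_b]=[L,\overline\partial_b]=0$ (since $d_b(d\theta)=0$, whence also $[\Lambda,\partial_b^{\adjoint}]=[\Lambda,\overline\partial_b^{\adjoint}]=0$) and the Lefschetz identity $[\Lambda,L]=(n-k)\Id$ on $\bigwedge^k H^{\dual}$. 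The preliminary step is the Lefschetz expansion of $P$ on a primitive $\phi$ of degree $k\le n-1$: from $\Lambda\partial_b\phi=\sqrt{-1}\,\overline\partial_b^{\adjoint}\phi$ and $\Lambda\overline\partial_b\phi=-\sqrt{-1}\,\partial_b^{\adjoint}\phi$, both primitive, one obtains
\[
  P\partial_b\phi=\partial_b\phi-\tfrac{\sqrt{-1}}{n-k+1}L\,\overline\partial_b^{\adjoint}\phi,\qquad P\overline\partial_b\phi=\overline\partial_b\phi+\tfrac{\sqrt{-1}}{n-k+1}L\,\partial_b^{\adjoint}\phi.
\]
Together with $\partial_{\RN}^{\adjoint}=a_{k-1}P\partial_b^{\adjoint}$ and $\overline\partial_{\RN}^{\adjoint}=a_{k-1}P\overline\partial_b^{\adjoint}$ on $\cE^k$, this expresses every Rumin operator and its adjoint explicitly in terms of $\partial_b,\overline\partial_b,\partial_b^{\adjoint},\overline\partial_b^{\adjoint},L,\Lambda$.

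For (1) (where the bracket is the graded commutator, i.e. the anticommutator of these odd operators) I would expand $\{\partial_{\RN},\overline\partial_{\RN}^{\adjoint}\}=a_{k-1}^2\,P\partial_bP\overline\partial_b^{\adjoint}+a_k^2\,P\overline\partial_b^{\adjoint}P\partial_b$ on a primitive $\phi$. Since $\overline\partial_b^{\adjoint}\phi$ is primitive and $(\overline\partial_b^{\adjoint})^2=0$, the first term collapses to $a_{k-1}^2\partial_b\overline\partial_b^{\adjoint}\phi$; in the second term $\overline\partial_b^{\adjoint}L=L\overline\partial_b^{\adjoint}+\sqrt{-1}\partial_b$ (from $[L,\overline\partial_b^{\adjoint}]=-\sqrt{-1}\partial_b$) and $\{\partial_b,\overline\partial_b^{\adjoint}\}=0$ from \eqref{prop:adjoint_formula_of_del_b_and_del_ber_b_2} leave a further multiple of $\partial_b\overline\partial_b^{\adjoint}\phi$. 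The total coefficient is $a_{k-1}^2-a_k^2+\tfrac{a_k^2}{n-k+1}=\tfrac{1}{n-k+1}-\tfrac{1}{n-k}+\tfrac{1}{(n-k)(n-k+1)}=0$, which proves the first identity; the second follows by exchanging $\partial_b\leftrightarrow\overline\partial_b$. The boundary case $k=n$, where $a_n=1$ and no projection enters on $\cE^n$, I would verify directly.

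Statement (2) is then formal. For $k\le n-1$ one has $d_{\RN}=\partial_{\RN}+\overline\partial_{\RN}$, and since $d_{\RN}^2=0$ kills the cross terms, $\Delta_{\RN}^k=(d_{\RN}d_{\RN}^{\adjoint})^2+(d_{\RN}^{\adjoint}d_{\RN})^2=(d_{\RN}d_{\RN}^{\adjoint}+d_{\RN}^{\adjoint}d_{\RN})^2$, so $\sqrt{\Delta_{\RN}}=d_{\RN}d_{\RN}^{\adjoint}+d_{\RN}^{\adjoint}d_{\RN}$; expanding and discarding $\{\partial_{\RN},\overline\partial_{\RN}^{\adjoint}\}$ and $\{\overline\partial_{\RN},\partial_{\RN}^{\adjoint}\}$ by (1) gives $\Delta_{\partial_{\RN}}+\Delta_{\overline\partial_{\RN}}$. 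For the Bochner--Kodaira identity (3) I would first record its clean $\partial_b$-version: substituting $\partial_b^{\adjoint}=\sqrt{-1}[\Lambda,\overline\partial_b]$ and $\overline\partial_b^{\adjoint}=-\sqrt{-1}[\Lambda,\partial_b]$ yields $\Delta_{\overline\partial_b}-\Delta_{\partial_b}=-\sqrt{-1}\,[\Lambda,\{\partial_b,\overline\partial_b\}]$, and the Sasakian structure enters through $d_b^2=-L\cL_T$, whose $(1,1)$-part is $\{\partial_b,\overline\partial_b\}=-L\cL_T$ (the $(2,0)$- and $(0,2)$-parts giving $\partial_b^2=\overline\partial_b^2=0$). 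As $\cL_T$ commutes with $L$ and $\Lambda$, this produces $\Delta_{\overline\partial_b}-\Delta_{\partial_b}=\sqrt{-1}(n-k)\cL_T$ on $\bigwedge^k H^{\dual}$.

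Finally I would descend this to the primitive forms. Inserting the $P$-formulas into $\Delta_{\overline\partial_{\RN}}-\Delta_{\partial_{\RN}}=a_{k-1}^2\,P(\overline\partial_bP\overline\partial_b^{\adjoint}-\partial_bP\partial_b^{\adjoint})+a_k^2\,P(\overline\partial_b^{\adjoint}P\overline\partial_b-\partial_b^{\adjoint}P\partial_b)$ and simplifying with $\cL_T^{\adjoint}=-\cL_T$ (as $T$ is Killing) and $\{\partial_b^{\adjoint},\overline\partial_b^{\adjoint}\}=\Lambda\cL_T$, the rescaling $a_k^2=1/(n-k)$ should absorb the Lefschetz weight $n-k$ and the output be $\sqrt{-1}\cL_T\phi$. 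I expect (3) to be the main obstacle: unlike in (1), the intermediate non-primitive pieces (the terms proportional to $L\,\overline\partial_b^{\adjoint}\partial_b^{\adjoint}\phi$) do not vanish by a square-zero relation but only cancel once the $a_{k-1}^2$- and $a_k^2$-contributions are combined, so the delicate point is the constant-chasing that collapses the coefficient of the surviving $\cL_T$-term to exactly $1$ while every spurious term cancels. This is bookkeeping rather than a conceptual difficulty, the conceptual content being entirely contained in the $\partial_b$-level identities above.
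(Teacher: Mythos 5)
Your proposal is essentially correct, and it supplies a self-contained derivation where the paper gives none: Proposition \ref{prop:commuteness_holomorphic_Rumin_derivative} is simply quoted from Rumin's paper (reference (34) of \emph{Sub-Riemannian limit of the differential form spectrum of contact manifolds}), so there is no in-text proof to compare against. Your route -- Lefschetz expansion $P\psi=\psi-\tfrac{1}{n-k+2}L\Lambda\psi$ on the image of $\partial_b,\overline\partial_b$ applied to primitives, combined with \eqref{eq:adjoint_formula_of_del_b_and_del_ber_b}, \eqref{prop:adjoint_formula_of_del_b_and_del_ber_b_2} and the Sasakian identity $\{\partial_b,\overline\partial_b\}=-L\cL_T$ -- is in the spirit of Rumin's original argument and the key numerics check out: the coefficient $a_{k-1}^2-a_k^2+a_k^2/(n-k+1)=0$ in (1) is right, (2) is the standard formal consequence of (1) together with $\Delta_{\RN}=(d_{\RN}d_{\RN}^{\adjoint}+d_{\RN}^{\adjoint}d_{\RN})^2$, and for (3) the weight $a_{k-1}^2+a_k^2/(n-k+1)=a_k^2=1/(n-k)$ exactly cancels the Lefschetz factor in $\Delta_{\overline\partial_b}-\Delta_{\partial_b}=\sqrt{-1}(n-k)\cL_T$. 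Two small corrections. First, in (3) the ``spurious'' terms are easier than you fear: each of the two contributions is proportional to $L\{\partial_b^{\adjoint},\overline\partial_b^{\adjoint}\}\phi=\pm L\Lambda\cL_T\phi$, which vanishes on a primitive $\phi$ term by term, so no cancellation between the $a_{k-1}^2$- and $a_k^2$-pieces is needed; what does require combining the two pieces is only the coefficient $1/(n-k)$ in front of $\overline\partial_b\overline\partial_b^{\adjoint}-\partial_b\partial_b^{\adjoint}$. Second, the case $k=n$ of (1) deserves more than ``verify directly'': on $\cE^n$ the operator $\partial_{\RN}$ does not map into $\cE^{n+1}$ (the complex continues with $D$ there), so you must say precisely which compositions $[\partial_{\RN},\overline\partial_{\RN}^{\adjoint}]$ denotes at that degree (the relevant content for the rest of the paper is the composition $\cE^n\to\cE^{n-1}\to\cE^n$ against $\cE^n\to\Omega_H^{n+1}\to\Omega_H^{n}$, where the second bracket reduces to $\{\partial_b,\overline\partial_b^{\adjoint}\}=0$ with no projection). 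With those points addressed, the argument is complete.
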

From Proposition \ref{prop:commuteness_holomorphic_Rumin_derivative} (1), we have the following proposition:
\begin{proposition}
  \label{prop:formulas-holomorphic-Rumin-Laplacian-2}

  For $k \leq n-1$, the operators $\Delta_{\partial_{\RN }}$ and $\Delta_{\overline{\partial}_{\RN }}$ commute.
\end{proposition}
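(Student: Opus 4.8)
The plan is to reduce the statement to a pair of anticommutation relations and then finish by a short formal manipulation. First I would record the two ingredients. From Proposition \ref{prop:commuteness_holomorphic_Rumin_derivative} (1) we have $\partial_{\RN} \overline{\partial}_{\RN}^{\adjoint} = \overline{\partial}_{\RN}^{\adjoint} \partial_{\RN}$ and, taking adjoints, $\partial_{\RN}^{\adjoint} \overline{\partial}_{\RN} = \overline{\partial}_{\RN} \partial_{\RN}^{\adjoint}$. Since $(\cE^{\bullet}(M), d_{\RN}^{\bullet})$ is a complex and $d_{\RN} = \partial_{\RN} + \overline{\partial}_{\RN}$ splits into bidegrees $(1,0)$ and $(0,1)$, the identity $d_{\RN}^2 = 0$ yields the bicomplex relations $\partial_{\RN}^2 = \overline{\partial}_{\RN}^2 = 0$ and $\partial_{\RN} \overline{\partial}_{\RN} + \overline{\partial}_{\RN} \partial_{\RN} = 0$; of these I shall only need the last one.

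The key step is the anticommutation lemma $\Delta_{\partial_{\RN}} \overline{\partial}_{\RN} = - \overline{\partial}_{\RN} \Delta_{\partial_{\RN}}$, which I would verify by
\begin{align*}
  \Delta_{\partial_{\RN}} \overline{\partial}_{\RN}
  & = \partial_{\RN} \partial_{\RN}^{\adjoint} \overline{\partial}_{\RN} + \partial_{\RN}^{\adjoint} \partial_{\RN} \overline{\partial}_{\RN}
  = \partial_{\RN} \overline{\partial}_{\RN} \partial_{\RN}^{\adjoint} - \partial_{\RN}^{\adjoint} \overline{\partial}_{\RN} \partial_{\RN}
  \\
  & = - \overline{\partial}_{\RN} \partial_{\RN} \partial_{\RN}^{\adjoint} - \overline{\partial}_{\RN} \partial_{\RN}^{\adjoint} \partial_{\RN}
  = - \overline{\partial}_{\RN} \Delta_{\partial_{\RN}} ,
\end{align*}
where the second equality uses $\partial_{\RN}^{\adjoint} \overline{\partial}_{\RN} = \overline{\partial}_{\RN} \partial_{\RN}^{\adjoint}$ in the first term and $\partial_{\RN} \overline{\partial}_{\RN} = - \overline{\partial}_{\RN} \partial_{\RN}$ in the second, while the third equality uses $\partial_{\RN} \overline{\partial}_{\RN} = - \overline{\partial}_{\RN} \partial_{\RN}$ in the first term and $\partial_{\RN}^{\adjoint} \overline{\partial}_{\RN} = \overline{\partial}_{\RN} \partial_{\RN}^{\adjoint}$ in the second. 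Taking adjoints of this lemma gives the companion relation $\Delta_{\partial_{\RN}} \overline{\partial}_{\RN}^{\adjoint} = - \overline{\partial}_{\RN}^{\adjoint} \Delta_{\partial_{\RN}}$.

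With both relations in hand the conclusion is immediate: since $\Delta_{\partial_{\RN}}$ anticommutes with each of $\overline{\partial}_{\RN}$ and $\overline{\partial}_{\RN}^{\adjoint}$, it commutes with each of the products $\overline{\partial}_{\RN} \overline{\partial}_{\RN}^{\adjoint}$ and $\overline{\partial}_{\RN}^{\adjoint} \overline{\partial}_{\RN}$ (two sign changes cancel), so
\[
  \Delta_{\partial_{\RN}} \Delta_{\overline{\partial}_{\RN}}
  = \Delta_{\partial_{\RN}} ( \overline{\partial}_{\RN} \overline{\partial}_{\RN}^{\adjoint} + \overline{\partial}_{\RN}^{\adjoint} \overline{\partial}_{\RN} )
  = ( \overline{\partial}_{\RN} \overline{\partial}_{\RN}^{\adjoint} + \overline{\partial}_{\RN}^{\adjoint} \overline{\partial}_{\RN} ) \Delta_{\partial_{\RN}}
  = \Delta_{\overline{\partial}_{\RN}} \Delta_{\partial_{\RN}} .
\]

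The part I expect to require the most care is justifying the bicomplex relation $\partial_{\RN} \overline{\partial}_{\RN} + \overline{\partial}_{\RN} \partial_{\RN} = 0$ at the top degree $k = n-1$: for $k \le n-2$ it follows cleanly from $d_{\RN}^2 = 0$ because the image then lies in degrees $\le n$ where $d_{\RN} = \partial_{\RN} + \overline{\partial}_{\RN}$ is first order, but at $k = n-1$ the next differential is the second-order operator $D$, so the naive bidegree splitting of $d_{\RN}^2 = 0$ is unavailable and one must instead appeal to the explicit structure of $\partial_b, \overline{\partial}_b$ on the Sasakian manifold together with the projection $P$ onto primitive forms. As a cross-check, and an alternative argument that sidesteps the boundary issue entirely, one may invoke parts (2) and (3) of Proposition \ref{prop:commuteness_holomorphic_Rumin_derivative}, which give $\Delta_{\partial_{\RN}} = \tfrac12 ( \sqrt{\Delta_{\RN}} - \sqrt{-1} \, \mathcal{L}_T )$ and $\Delta_{\overline{\partial}_{\RN}} = \tfrac12 ( \sqrt{\Delta_{\RN}} + \sqrt{-1} \, \mathcal{L}_T )$; the commutator then reduces to $\tfrac12 [ \sqrt{\Delta_{\RN}} , \sqrt{-1} \, \mathcal{L}_T ]$, which vanishes since $\mathcal{L}_T$ commutes with $\Delta_{\RN}$ on a Sasakian manifold and hence with $\sqrt{\Delta_{\RN}}$ by the functional calculus.
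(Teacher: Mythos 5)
Your proposal is correct, and it is in fact more careful than the paper, which offers no proof at all: it simply asserts that the proposition follows ``from Proposition \ref{prop:commuteness_holomorphic_Rumin_derivative} (1)''. Your main argument is the natural fleshing-out of that assertion, and you correctly identify that part (1) alone cannot suffice --- the relations $[\partial_{\RN},\overline{\partial}_{\RN}^{\adjoint}]=[\overline{\partial}_{\RN},\partial_{\RN}^{\adjoint}]=0$ only let one swap $\partial_{\RN}\leftrightarrow\overline{\partial}_{\RN}^{\adjoint}$ and $\partial_{\RN}^{\adjoint}\leftrightarrow\overline{\partial}_{\RN}$, so some relation between $\partial_{\RN}$ and $\overline{\partial}_{\RN}$ themselves (the anticommutator) is genuinely needed. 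Your worry about $k=n-1$ is also well-founded and not merely cosmetic: on horizontal forms one has $\partial_b\overline{\partial}_b+\overline{\partial}_b\partial_b=-L\,\cL_T\neq 0$ (the $(1,1)$-component of $d_b^2=-L\,\cL_T$), so the vanishing of $\{\partial_{\RN},\overline{\partial}_{\RN}\}$ really does depend on the projection $P$ and on $d_{\RN}^2=0$, which at degree $n-1$ involves the second-order operator $D$ and does not split by bidegree. For that reason your fallback argument --- writing $2\Delta_{\partial_{\RN}}=\sqrt{\Delta_{\RN}}-\ii\cL_T$ and $2\Delta_{\overline{\partial}_{\RN}}=\sqrt{\Delta_{\RN}}+\ii\cL_T$ from Proposition \ref{prop:commuteness_holomorphic_Rumin_derivative} (2), (3) and reducing to $[\sqrt{\Delta_{\RN}},\cL_T]=0$, which holds because the Reeb flow preserves $\theta$, $J$, hence $g_{\theta,J}$, $P$, $d_{\RN}$ and $D$, and because $\cL_T$ then preserves each finite-dimensional eigenspace of $\Delta_{\RN}$ --- is the cleaner and fully watertight route, and I would recommend presenting it as the primary proof rather than as a cross-check. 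Both arguments buy you the same statement; the first stays closer to the K\"ahler-identity formalism the paper is invoking, while the second avoids the degree-$(n-1)$ boundary issue entirely at the modest cost of using the deeper cited identities (2) and (3).
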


We set
\begin{align*}
	\cQ^k (\lambda_{10}, \lambda_{01})
	:=
	\begin{cases}
		\left\{
		\phi \in \mathcal{E}^k (M)
		\middle|
		\Delta_{\partial_{\RN }} \phi = \lambda_{10} \phi,
		\Delta_{\overline{\partial}_{\RN }} \phi = \lambda_{01} \phi
		\right\} ,
		& k \leq n-1 ,
		\\
		\partial_{\RN } \cQ^{n-1} (\lambda_{10}, \lambda_{01})
		+ \overline{\partial}_{\RN } \cQ^{n-1} (\lambda_{10}, \lambda_{01})
		,
		& k=n.
	\end{cases}
\end{align*}
From Propositions \ref{eq:sub-elliptic-Rumin_Laplacian} (1),
\ref{prop:commuteness_holomorphic_Rumin_derivative} (2),
and \ref{prop:formulas-holomorphic-Rumin-Laplacian-2},
we decompose $\cE^k (M)$ into a direct sum of $\cQ^k (\lambda_{10}, \lambda_{01})$ for $k \leq n-1$, that is,
\begin{equation}
  \cE^k ( M ) = \bigoplus_{\lambda_{10} ,\lambda_{01} } \cQ^k (\lambda_{10}, \lambda_{01})
  .
  \label{eq:decomposition_cE^k}
\end{equation}
It follows that for $k \leq n$,
\begin{align*}
		\mathcal{E}^k (M)
	& =
		( \Ker \Delta_{\partial_{\RN }} \cap \Ker \Delta_{\overline{\partial}_{\RN }} )
		\oplus ( \Ker \Delta_{\partial_{\RN }} \cap \Ima \Delta_{\overline{\partial}_{\RN }} )
	\\
	& \quad
		\oplus ( \Ima \Delta_{\partial_{\RN }} \cap \Ker \Delta_{\overline{\partial}_{\RN }} )
		\oplus ( \Ima \Delta_{\partial_{\RN }} \cap \Ima \Delta_{\overline{\partial}_{\RN }} ) .
\end{align*}
We research each component.

About $\Ker \Delta_{\partial_{\RN }} \cap \Ker \Delta_{\overline{\partial}_{\RN }}$, from Proposition \ref{prop:commuteness_holomorphic_Rumin_derivative} (2), we have
\begin{align*}
	\Ker \Delta_{\partial_{\RN }} \cap \Ker \Delta_{\overline{\partial}_{\RN }} = \Ker \Delta_{\RN }.
\end{align*}

Next, we consider the space $\Ima \Delta_{\partial_{\RN }} \cap \Ima \Delta_{\overline{\partial}_{\RN }}$.

\begin{proposition}
	\label{prop:decomposition_of_simultaneous_diagonalization_of_box_and_box_bar}

  For $k \leq n-1$ and $\lambda_{10} , \lambda_{01} >0$, $\cQ^k (\lambda_{10}, \lambda_{01})$ has the following decomposition:
	\begin{align*}
		\cQ^k (\lambda_{10}, \lambda_{01})
	& =
		\cQ^k (\lambda_{10}, \lambda_{01}) \cap \Ima \partial_{\RN }^{\adjoint } \cap \Ima \overline{\partial}_{\RN }^{\adjoint }
		\oplus \cQ^k (\lambda_{10}, \lambda_{01}) \cap \Ima {\partial_{\RN }} \cap \Ima \overline{\partial}_{\RN }^{\adjoint }
	\notag \\
	& \quad
		\oplus \cQ^k (\lambda_{10}, \lambda_{01} ) \cap \Ima \partial_{\RN }^{\adjoint } \cap \Ima {\overline{\partial}_{\RN }}
		\oplus \cQ^k (\lambda_{10}, \lambda_{01} ) \cap \Ima {\partial_{\RN }} \cap \Ima {\overline{\partial}_{\RN }}.
	\end{align*}
\end{proposition}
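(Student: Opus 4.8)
The plan is to exhibit $\cQ^k(\lambda_{10},\lambda_{01})$ as the image of a single family of mutually commuting orthogonal projections, obtained by combining the $\Delta_{\partial_{\RN}}$-Hodge decomposition with the $\Delta_{\overline{\partial}_{\RN}}$-Hodge decomposition on the same eigenspace. Fix $k\le n-1$ and $\lambda_{10},\lambda_{01}>0$, and write $\cQ:=\cQ^k(\lambda_{10},\lambda_{01})$.

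First I would record the two single-operator Hodge decompositions on $\cQ$. Because $\Delta_{\partial_{\RN}}$ acts on $\cQ$ as the scalar $\lambda_{10}>0$, every $\phi\in\cQ$ can be written $\phi=\lambda_{10}^{-1}\partial_{\RN}\partial_{\RN}^{\adjoint}\phi+\lambda_{10}^{-1}\partial_{\RN}^{\adjoint}\partial_{\RN}\phi$; since $\partial_{\RN}^2=0$ forces $\Ima\partial_{\RN}\perp\Ima\partial_{\RN}^{\adjoint}$, this gives the orthogonal splitting $\cQ=(\cQ\cap\Ima\partial_{\RN})\oplus(\cQ\cap\Ima\partial_{\RN}^{\adjoint})$, realized by the projections $\Pi:=\lambda_{10}^{-1}\partial_{\RN}\partial_{\RN}^{\adjoint}$ and $\Pi':=\lambda_{10}^{-1}\partial_{\RN}^{\adjoint}\partial_{\RN}$. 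The same argument applied to $\Delta_{\overline{\partial}_{\RN}}=\lambda_{01}>0$ produces a second orthogonal splitting with projections $\overline{\Pi}:=\lambda_{01}^{-1}\overline{\partial}_{\RN}\overline{\partial}_{\RN}^{\adjoint}$ and $\overline{\Pi}':=\lambda_{01}^{-1}\overline{\partial}_{\RN}^{\adjoint}\overline{\partial}_{\RN}$.

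The crux is that these two splittings are compatible, for which it suffices to prove the single commutator identity $[\partial_{\RN}\partial_{\RN}^{\adjoint},\overline{\partial}_{\RN}\overline{\partial}_{\RN}^{\adjoint}]=0$ (the commutation of $\Pi$ with $\overline{\Pi}$); the complementary projections then commute automatically, since $\Pi'=\Id-\Pi$ and $\overline{\Pi}'=\Id-\overline{\Pi}$ on $\cQ$. To prove it I would first use Proposition \ref{prop:commuteness_holomorphic_Rumin_derivative}(1), i.e.\ $\partial_{\RN}\overline{\partial}_{\RN}^{\adjoint}=\overline{\partial}_{\RN}^{\adjoint}\partial_{\RN}$ and $\overline{\partial}_{\RN}\partial_{\RN}^{\adjoint}=\partial_{\RN}^{\adjoint}\overline{\partial}_{\RN}$, to rewrite both $\partial_{\RN}\partial_{\RN}^{\adjoint}\overline{\partial}_{\RN}\overline{\partial}_{\RN}^{\adjoint}$ and $\overline{\partial}_{\RN}\overline{\partial}_{\RN}^{\adjoint}\partial_{\RN}\partial_{\RN}^{\adjoint}$ with the adjoint factors collected together, and then use the anticommutation $\partial_{\RN}\overline{\partial}_{\RN}=-\overline{\partial}_{\RN}\partial_{\RN}$ together with its adjoint to match the two expressions. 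The anticommutation is the bidegree-$(1,1)$ component of $d_{\RN}^2=0$ and holds below the middle degree; crucially, the image-projection $\Pi$ (resp.\ $\overline{\Pi}$) only involves $\partial_{\RN}$ (resp.\ $\overline{\partial}_{\RN}$) mapping up from degree $k-1$, so the whole computation stays in degrees $\le k\le n-1$ and never meets the second-order operator $D$ in middle degree. The same manipulation shows that $\Pi$ and $\overline{\Pi}$ each commute with both $\Delta_{\partial_{\RN}}$ and $\Delta_{\overline{\partial}_{\RN}}$, hence preserve $\cQ$.

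Granting $[\Pi,\overline{\Pi}]=0$, the four idempotents $\Pi,\Pi',\overline{\Pi},\overline{\Pi}'$ give two commuting resolutions of the identity on $\cQ$, so $\cQ=\Ima(\Pi\overline{\Pi})\oplus\Ima(\Pi\overline{\Pi}')\oplus\Ima(\Pi'\overline{\Pi})\oplus\Ima(\Pi'\overline{\Pi}')$. Identifying $\Ima\Pi=\cQ\cap\Ima\partial_{\RN}$, $\Ima\Pi'=\cQ\cap\Ima\partial_{\RN}^{\adjoint}$ and similarly for the barred projections yields precisely the four summands in the statement, and any two of them are orthogonal because they differ in the $\partial_{\RN}$-type or the $\overline{\partial}_{\RN}$-type while $\Ima\partial_{\RN}\perp\Ima\partial_{\RN}^{\adjoint}$ and $\Ima\overline{\partial}_{\RN}\perp\Ima\overline{\partial}_{\RN}^{\adjoint}$. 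I expect the commutator identity $[\partial_{\RN}\partial_{\RN}^{\adjoint},\overline{\partial}_{\RN}\overline{\partial}_{\RN}^{\adjoint}]=0$, and in particular checking that it calls on the anticommutation relation only below middle degree, to be the main obstacle; everything else is the formal bookkeeping of commuting spectral projections.
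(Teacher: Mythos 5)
Your proposal is correct and follows essentially the same route as the paper: both split $\cQ^k(\lambda_{10},\lambda_{01})$ by intersecting the Hodge decomposition of $\Delta_{\partial_{\RN}}$ with that of $\Delta_{\overline{\partial}_{\RN}}$ on the joint eigenspace. The paper leaves the compatibility of the two splittings implicit (it rests on Proposition \ref{prop:commuteness_holomorphic_Rumin_derivative}~(1) and Proposition \ref{prop:formulas-holomorphic-Rumin-Laplacian-2}), whereas you verify it explicitly through the commutator $[\partial_{\RN}\partial_{\RN}^{\adjoint},\overline{\partial}_{\RN}\overline{\partial}_{\RN}^{\adjoint}]=0$ of the spectral projections; this is a useful elaboration of the same argument rather than a different one.
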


\begin{proof}
	In the same way as the Hodge-de Rham Laplacian, we have
	\begin{align*}
			\Ima \Delta_{\partial_{\RN }}
		=
			\Ima {\partial_{\RN }}
			\oplus \Ima \partial_{\RN }^{\adjoint },
		\hspace{10mm}
			\Ima \Delta_{\overline{\partial}_{\RN }}
		=
			\Ima {\overline{\partial}_{\RN }}
			\oplus \Ima \overline{\partial}_{\RN }^{\adjoint }.
	\end{align*}
	Therefore, we get
	\begin{align}
			\Ima \Delta_{\partial_{\RN }} \cap \Ima \Delta_{\overline{\partial}_{\RN }}
		& =
			\Ima \partial_{\RN }^{\adjoint } \cap \Ima \overline{\partial}_{\RN }^{\adjoint }
			\oplus \Ima {\partial_{\RN }} \cap \Ima \overline{\partial}_{\RN }^{\adjoint }
		\notag \\
		& \quad
			\oplus \Ima \partial_{\RN }^{\adjoint } \cap \Ima {\overline{\partial}_{\RN }}
			\oplus \Ima {\partial_{\RN }} \cap \Ima {\overline{\partial}_{\RN }}
		\notag.
	\end{align}
	Thus, it is clear that we check this proposition.
\end{proof}

From Propositions \ref{prop:commuteness_holomorphic_Rumin_derivative} (1)
and \ref{prop:decomposition_of_simultaneous_diagonalization_of_box_and_box_bar},
we have the following proposition:
\begin{proposition}
	\label{prop:isomorphim_among_eigenspaces_of_the_Rumin_Laplacian_Im_Im}

  We assume that $\lambda_{10} , \lambda_{01} > 0$.
  Then, we have
	\begin{enumerate}
		\item[{\rm (1)}]
		for $k \leq n-2$,
		the following operators are isomorphisms:
		\begin{figure}[ht]\begin{center}
			\begin{tikzcd}
				\cQ^{k} (\lambda_{10}, \lambda_{01} ) \cap \Ima \partial_{\RN }^{\adjoint } \cap \Ima \overline{\partial}_{\RN }^{\adjoint }
					\arrow{d}[swap]{\partial_{\RN } }
					\arrow{r}{\overline{\partial}_{\RN } }
				&
				\cQ^{k+1} (\lambda_{10}, \lambda_{01} ) \cap \Ima \partial_{\RN }^{\adjoint } \cap \Ima {\overline{\partial}_{\RN }}
					\arrow{d}[swap]{\partial_{\RN } }
				\\
				\cQ^{k+1} (\lambda_{10}, \lambda_{01} ) \cap \Ima {\partial_{\RN }} \cap \Ima \overline{\partial}_{\RN }^{\adjoint }
				\arrow{r}{\overline{\partial}_{\RN } }
				&
				\cQ^{k+2} (\lambda_{10}, \lambda_{01} ) \cap \Ima {\partial_{\RN }} \cap \Ima {\overline{\partial}_{\RN }}
				,
			\end{tikzcd}
		\end{center}\end{figure}
		\item[{\rm (2)}]
    for $k = n-1$,
		the following operators are isomorphisms:
		\begin{figure}[ht]\begin{center}
			\begin{tikzcd}
				\cQ^{n-1} (\lambda_{10}, \lambda_{01} ) \cap \Ima \partial_{\RN }^{\adjoint } \cap \Ima \overline{\partial}_{\RN }^{\adjoint }
					\arrow{d}[swap]{\partial_{\RN } }
					\arrow{r}{\overline{\partial}_{\RN } }
				&
				\cQ^{n} (\lambda_{10}, \lambda_{01} ) \cap \Ima \partial_{\RN }^{\adjoint } \cap \Ima {\overline{\partial}_{\RN }}
				\\
				\cQ^{n} (\lambda_{10}, \lambda_{01} ) \cap \Ima {\partial_{\RN }} \cap \Ima \overline{\partial}_{\RN }^{\adjoint }
				,
				&
			\end{tikzcd}
		\end{center}\end{figure}
  	\item[{\rm (3)}]
    for $k \leq n-1$, the following operators are isomorphisms:
    \begin{figure}[ht]\begin{center}
      \begin{tikzcd}
        \cQ^{k} (0, \lambda_{01} ) \cap \Ima \overline{\partial}_{\RN }^{\adjoint }
          \arrow{r}{\overline{\partial}_{\RN } }
        &
        \cQ^{k+1} (0, \lambda_{01} ) \cap \Ima {\overline{\partial}_{\RN }},
        \\
        \cQ^{k} (\lambda_{10} , 0 ) \cap \Ima \partial_{\RN }^{\adjoint }
          \arrow{r}{\partial_{\RN } }
        &
        \cQ^{k+1} (\lambda_{10} , 0 ) \cap \Ima {\partial_{\RN }} .
      \end{tikzcd}
    \end{center}\end{figure}
	\end{enumerate}
\end{proposition}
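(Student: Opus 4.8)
The plan is to deduce all three parts from a single Hodge-theoretic mechanism applied on each finite-dimensional simultaneous eigenspace $\cQ^k(\lambda_{10},\lambda_{01})$ (finite-dimensional by Proposition~\ref{eq:sub-elliptic-Rumin_Laplacian}(1)). The two ingredients I would use are the commutation relations $[\partial_{\RN},\overline{\partial}_{\RN}^{\adjoint}]=0$ and $[\overline{\partial}_{\RN},\partial_{\RN}^{\adjoint}]=0$ of Proposition~\ref{prop:commuteness_holomorphic_Rumin_derivative}(1), together with the bigraded consequences of $d_{\RN}^2=0$, namely $\partial_{\RN}^2=\overline{\partial}_{\RN}^2=0$ and $\partial_{\RN}\overline{\partial}_{\RN}+\overline{\partial}_{\RN}\partial_{\RN}=0$, which split $d_{\RN}$ by bidegree for $k<n$, and their formal adjoints.

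The key observation is the following. If $\phi\in\cQ^k(\lambda_{10},\lambda_{01})\cap\Ima\overline{\partial}_{\RN}^{\adjoint}$, then $\overline{\partial}_{\RN}^{\adjoint}\phi=0$ because $(\overline{\partial}_{\RN}^{\adjoint})^2=0$ and the Hodge pieces are orthogonal; hence $\Delta_{\overline{\partial}_{\RN}}\phi=\overline{\partial}_{\RN}^{\adjoint}\overline{\partial}_{\RN}\phi=\lambda_{01}\phi$, so when $\lambda_{01}>0$ the operator $\lambda_{01}^{-1}\overline{\partial}_{\RN}^{\adjoint}$ is a two-sided inverse to $\overline{\partial}_{\RN}$ on $\phi$. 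Symmetrically, on $\Ima\overline{\partial}_{\RN}$ one has $\overline{\partial}_{\RN}\phi=0$ and $\Delta_{\overline{\partial}_{\RN}}=\overline{\partial}_{\RN}\overline{\partial}_{\RN}^{\adjoint}$, and the analogous statements hold for $\partial_{\RN}$ with $\lambda_{10}$. First I would check that $\overline{\partial}_{\RN}$ sends the relevant source into the relevant target: it commutes with $\Delta_{\overline{\partial}_{\RN}}$ trivially and with $\Delta_{\partial_{\RN}}$ by the (anti)commutation relations, so it preserves both eigenvalues; it clearly lands in $\Ima\overline{\partial}_{\RN}$; and it preserves the $\partial_{\RN}$-image type, since $[\overline{\partial}_{\RN},\partial_{\RN}^{\adjoint}]=0$ gives $\overline{\partial}_{\RN}(\Ima\partial_{\RN}^{\adjoint})\subset\Ima\partial_{\RN}^{\adjoint}$ and $\partial_{\RN}\overline{\partial}_{\RN}+\overline{\partial}_{\RN}\partial_{\RN}=0$ gives $\overline{\partial}_{\RN}(\Ima\partial_{\RN})\subset\Ima\partial_{\RN}$. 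The candidate inverse $\lambda_{01}^{-1}\overline{\partial}_{\RN}^{\adjoint}$ preserves the same data by the adjoint relations $[\partial_{\RN},\overline{\partial}_{\RN}^{\adjoint}]=0$ and $\partial_{\RN}^{\adjoint}\overline{\partial}_{\RN}^{\adjoint}+\overline{\partial}_{\RN}^{\adjoint}\partial_{\RN}^{\adjoint}=0$.

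For part (1) I would apply this to each of the four edges of the square: the two horizontal arrows are $\overline{\partial}_{\RN}$ with inverse $\lambda_{01}^{-1}\overline{\partial}_{\RN}^{\adjoint}$, and the two vertical arrows are $\partial_{\RN}$ with inverse $\lambda_{10}^{-1}\partial_{\RN}^{\adjoint}$; in each case the source piece carries the vanishing of the matching codifferential supplied by the decomposition of Proposition~\ref{prop:decomposition_of_simultaneous_diagonalization_of_box_and_box_bar}, which is exactly what collapses the relevant Laplacian to one term. Part (3) is the degenerate instance of the same argument in which one Laplacian vanishes: on $\cQ^k(0,\lambda_{01})$ every form is $\partial_{\RN}$-harmonic, i.e. $\partial_{\RN}\phi=\partial_{\RN}^{\adjoint}\phi=0$, so only the $\overline{\partial}_{\RN}$-splitting survives and $\lambda_{01}^{-1}\overline{\partial}_{\RN}^{\adjoint}$ again inverts $\overline{\partial}_{\RN}$; the second arrow is symmetric.

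The delicate point, and where I expect the real work, is part (2), the boundary degree $k=n-1\to n$. Here the complex changes order: $d_{\RN}=D$ on $\cE^n$ is second order and no longer splits as $\partial_{\RN}+\overline{\partial}_{\RN}$, so the degree-$(n+1)$ corner of the square is genuinely absent and only the two arrows out of $\cQ^{n-1}$ remain. I would need to verify that these two maps still land in the correctly characterized subspaces of $\cE^n$, where by definition $\cQ^n(\lambda_{10},\lambda_{01})=\partial_{\RN}\cQ^{n-1}(\lambda_{10},\lambda_{01})+\overline{\partial}_{\RN}\cQ^{n-1}(\lambda_{10},\lambda_{01})$, and that the identities $(\overline{\partial}_{\RN}^{\adjoint})^2=0$ and $[\overline{\partial}_{\RN},\partial_{\RN}^{\adjoint}]=0$ entering the mechanism survive across this degree (they do, being the formal adjoints of relations valid on $\cE^{n-2}$, provided $n\ge 2$). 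Once this bookkeeping at the middle degree is settled, the injectivity--surjectivity argument is identical to that of part (1).
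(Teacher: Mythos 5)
Your proposal is correct and follows exactly the route the paper intends: the paper states this proposition with no written proof, deriving it directly from Proposition~\ref{prop:commuteness_holomorphic_Rumin_derivative}~(1) and Proposition~\ref{prop:decomposition_of_simultaneous_diagonalization_of_box_and_box_bar}, and your argument (collapsing $\Delta_{\overline{\partial}_{\RN}}$ to a single term on each $\Ima$-piece so that $\lambda_{01}^{-1}\overline{\partial}_{\RN}^{\adjoint}$ inverts $\overline{\partial}_{\RN}$, and symmetrically for $\partial_{\RN}$) is precisely the standard Hodge-theoretic verification being left to the reader, including the correct handling of the missing corner at $k=n-1$.
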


Next, we consider the parts $\Ker \Delta_{\partial_{\RN }} \cap \Ima \Delta_{\overline{\partial}_{\RN }}$ and $\Ima \Delta_{\partial_{\RN }} \cap \Ker \Delta_{\overline{\partial}_{\RN }}$.
\begin{proposition}
	\label{prop:isomorphim_among_eigenspaces_of_the_Rumin_Laplacian_Ker_Im}
	On the following spaces
	\begin{enumerate}
		\item [(1)]
		$\mathcal{E}^k (M) \cap \Ker \Delta_{\partial_{\RN }} \cap \Ima \Delta_{\overline{\partial}_{\RN }}$ for $k \leq n-1$,
		\item [(2)]
		$\mathcal{E}^n (M) \cap \Ima \overline{\partial}_{\RN } \cap \Ker \Delta_{\partial_{\RN }}$,
		\item [(3)]
		$\mathcal{E}^k (M) \cap \Ima \Delta_{\partial_{\RN }} \cap \Ker \Delta_{\overline{\partial}_{\RN }}$ for $k \leq n-1$, and
		\item [(4)]
		$\mathcal{E}^n (M) \cap \Ima \partial_{\RN } \cap \Ker \Delta_{\overline{\partial}_{\RN }}$,
	\end{enumerate}
	we have
	\[
		\Delta_{\RN }
		=
		- \mathcal{L}_T^2 .
	\]
\end{proposition}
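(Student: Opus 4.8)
The plan is to treat the four spaces in two groups: the degree $k \leq n-1$ cases (1) and (3), which follow directly from the K\"ahler-type identities of Proposition~\ref{prop:commuteness_holomorphic_Rumin_derivative}, and the middle-degree cases (2) and (4), which I will reduce to degree $n-1$ through the isomorphisms $\partial_{\RN}, \overline{\partial}_{\RN}$ while exploiting that the second-order operator $D$ annihilates exact forms.

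For (1), take $\phi \in \mathcal{E}^k(M) \cap \Ker \Delta_{\partial_{\RN}} \cap \Ima \Delta_{\overline{\partial}_{\RN}}$ with $k \leq n-1$. Since $\Delta_{\partial_{\RN}}$ is self-adjoint, $\Delta_{\partial_{\RN}}\phi = 0$, and the identities of Proposition~\ref{prop:commuteness_holomorphic_Rumin_derivative}~(2),(3) collapse to $\sqrt{\Delta_{\RN}} = \Delta_{\overline{\partial}_{\RN}} = \sqrt{-1}\,\mathcal{L}_T$ on $\Ker \Delta_{\partial_{\RN}}$; squaring gives $\Delta_{\RN} = -\mathcal{L}_T^2$. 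Case (3) is symmetric: on $\Ker \Delta_{\overline{\partial}_{\RN}}$ one obtains $\sqrt{\Delta_{\RN}} = \Delta_{\partial_{\RN}} = -\sqrt{-1}\,\mathcal{L}_T$, and squaring again yields $\Delta_{\RN} = -\mathcal{L}_T^2$.

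For the middle-degree case (2), let $\phi \in \mathcal{E}^n(M) \cap \Ima \overline{\partial}_{\RN} \cap \Ker \Delta_{\partial_{\RN}}$. By Proposition~\ref{prop:isomorphim_among_eigenspaces_of_the_Rumin_Laplacian_Im_Im}~(3) at $k=n-1$, I will write $\phi = \overline{\partial}_{\RN}\psi$ with $\psi \in \cQ^{n-1}(0,\lambda_{01}) \cap \Ima \overline{\partial}_{\RN}^{\adjoint}$ for some $\lambda_{01} > 0$; then $\partial_{\RN}\psi = \partial_{\RN}^{\adjoint}\psi = 0$ (as $\psi \in \Ker \Delta_{\partial_{\RN}}$) and $\overline{\partial}_{\RN}^{\adjoint}\psi = 0$ (as $(\overline{\partial}_{\RN}^{\adjoint})^2 = 0$). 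Since $\phi = d_{\RN}^{n-1}\psi$ is $d_{\RN}^{n-1}$-exact and $(\mathcal{E}^\bullet(M), d_{\RN}^\bullet)$ is a complex, $D\phi = d_{\RN}^n d_{\RN}^{n-1}\psi = 0$, so the term $D^{\adjoint}D$ in $\Delta_{\RN}^n$ drops out. For the remaining term, the commutation relations of Proposition~\ref{prop:commuteness_holomorphic_Rumin_derivative}~(1) together with the vanishings above give $d_{\RN}^{n-1\,\adjoint}\phi = \overline{\partial}_{\RN}^{\adjoint}\overline{\partial}_{\RN}\psi = \lambda_{01}\psi$, hence $d_{\RN}^{n-1}d_{\RN}^{n-1\,\adjoint}\phi = \lambda_{01}\overline{\partial}_{\RN}\psi = \lambda_{01}\phi$ and $(d_{\RN}^{n-1}d_{\RN}^{n-1\,\adjoint})^2\phi = \lambda_{01}^2\phi$, so $\Delta_{\RN}^n\phi = \lambda_{01}^2\phi$. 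Finally, since the Sasakian condition $\mathcal{L}_T J = 0$ makes $\mathcal{L}_T$ commute with $\overline{\partial}_{\RN}$ and $\sqrt{-1}\,\mathcal{L}_T\psi = \lambda_{01}\psi$ by Proposition~\ref{prop:commuteness_holomorphic_Rumin_derivative}~(3), I get $\mathcal{L}_T\phi = -\sqrt{-1}\,\lambda_{01}\phi$, so $-\mathcal{L}_T^2\phi = \lambda_{01}^2\phi = \Delta_{\RN}^n\phi$. Case (4) follows by interchanging $\partial_{\RN}$ and $\overline{\partial}_{\RN}$, working in $\cQ^{n-1}(\lambda_{10},0)$ with $\sqrt{-1}\,\mathcal{L}_T\psi = -\lambda_{10}\psi$.

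The main obstacle is the middle degree, where the clean identities $\sqrt{\Delta_{\RN}} = \Delta_{\partial_{\RN}} + \Delta_{\overline{\partial}_{\RN}}$ and $\sqrt{-1}\,\mathcal{L}_T = \Delta_{\overline{\partial}_{\RN}} - \Delta_{\partial_{\RN}}$ are unavailable, since $D$ is second order and $\mathcal{E}^n$ does not split as in lower degrees. The key observation that resolves this is that every element of $\Ima \overline{\partial}_{\RN}$ (respectively $\Ima \partial_{\RN}$) is $d_{\RN}^{n-1}$-exact, so the second-order term $D^{\adjoint}D$ disappears and the fourth-order term is governed by the degree-$(n-1)$ eigenvalue. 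Checking that the auxiliary pieces $\partial_{\RN}^{\adjoint}\psi$ and $\overline{\partial}_{\RN}^{\adjoint}\psi$ vanish---so that $d_{\RN}^{n-1\,\adjoint}\phi$ reduces to a single eigenvalue---is the one place where the precise structure of the decomposition~\eqref{eq:decomposition_cE^k} and the commutation relations must be applied with care.
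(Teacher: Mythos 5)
Your proof is correct and takes essentially the same route as the paper: cases (1) and (3) are exactly the paper's argument, restricting Proposition~\ref{prop:commuteness_holomorphic_Rumin_derivative}~(2),(3) to $\Ker \Delta_{\partial_{\RN }}$ (resp.\ $\Ker \Delta_{\overline{\partial}_{\RN }}$) and squaring, while for (2) and (4) you spell out the eigenvalue computation that the paper compresses into a citation of Propositions~\ref{prop:commuteness_holomorphic_Rumin_derivative}~(1) and \ref{prop:decomposition_of_simultaneous_diagonalization_of_box_and_box_bar}. Your key middle-degree observation---that $D$ kills $d_{\RN}$-exact forms so only $(d_{\RN} d_{\RN}^{\adjoint})^2$ survives, with eigenvalue $\lambda_{01}^2$ inherited from degree $n-1$---is the same mechanism the paper deploys in its analysis of $d_{\RN} W$ in \S\ref{sec:Eigenvectors of the Rumin Laplacian}, so this is a faithful (and usefully more explicit) rendering of the intended proof.
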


\begin{proof}
	We consider $\mathcal{E}^k (M) \cap \Ker \Delta_{\partial_{\RN }} \cap \Ima \Delta_{\overline{\partial}_{\RN }}$ for $k \leq n-1$.
	By Proposition \ref{prop:commuteness_holomorphic_Rumin_derivative} (3),
	we have
	\begin{equation*}
		\sqrt{-1} \mathcal{L}_T
	=
		 \Delta_{\overline{\partial}_{\RN }}
	= \sqrt{\Delta_{\RN }}
	,
	\end{equation*}
	that is,
	\[
		\Delta_{\RN }
		=
		- \mathcal{L}_T^2
		.
	\]
	From Propositions \ref{prop:commuteness_holomorphic_Rumin_derivative} (1)
	and \ref{prop:decomposition_of_simultaneous_diagonalization_of_box_and_box_bar},
	we obtain (2).
	Similarly,
	we get (3) and (4).
\end{proof}

  We consider the space $\Ker \partial_{\RN }^{\adjoint } \cap \Ker \overline{\partial}_{\RN }^{\adjoint } \cap \cE^n (M)$.
  From \eqref{eq:D-1},
  for $\phi \in \Ker \partial_{\RN }^{\adjoint } \cap \Ker \overline{\partial}_{\RN }^{\adjoint } \cap \cE^n (M)$,
  we have
  \begin{equation}
    \Delta_{\RN} \phi = D^{\adjoint } D \phi = - \mathcal{L}_T^2 \phi
		.
    \label{eq:isomorphim_among_eigenspaces_of_the_Rumin_Laplacian_Ker_Ker}
  \end{equation}
  We set for $\nu \in \mathbb{R}$
  \[
  \cQ^n (\nu )
  :=
  \left\{
    \phi \in \Ker \partial_{\RN }^{\adjoint } \cap \Ker \overline{\partial}_{\RN }^{\adjoint } \cap \cE^n (M)
  \, \middle| \,
    \cL_T \phi = \ii \nu \phi
  \right\}
  \subset
  \{
    \Delta_{\RN} \phi = \nu^2 \phi
  \}
  .
  \]
  We decompose $\Ker \partial_{\RN }^{\adjoint } \cap \Ker \overline{\partial}_{\RN }^{\adjoint } \cap \cE^n (M)$ into a direct sum,
  \[
    \Ker \partial_{\RN }^{\adjoint } \cap \Ker \overline{\partial}_{\RN }^{\adjoint } \cap \cE^n (M)
    =
    \bigoplus_{\nu }
    \cQ^n (\nu )
    .
  \]

  We consider $\Ima \partial_{\RN } + \Ima \overline{\partial}_{\RN }$.
  From \eqref{eq:decomposition_cE^k}, we have
  \begin{align}
  		\left( \Ima \partial_{\RN } + \Ima \overline{\partial}_{\RN } \right) \cap \cE^n (M)
  	& =
  		\partial_{\RN }
  			\bigoplus \cQ^{n-1} (\lambda_{10 } , \lambda_{01} )
  		+ \overline{\partial}_{\RN }
  			\bigoplus \cQ^{n-1} (\lambda_{10 } , \lambda_{01} )
  	\notag \\
  	& =
  				\bigoplus
					\left(
					\partial_{\RN } \cQ^{n-1} (\lambda_{10 } , \lambda_{01} )
  			+
  				\overline{\partial}_{\RN } \cQ^{n-1} (\lambda_{10 } , \lambda_{01} )
					\right)
          .
    \notag
  \end{align}

	To calculate all eigenvalues of $\Delta_{\RN}$ on $\cE^n (M)$,
  from
	Propositions
	\ref{prop:isomorphim_among_eigenspaces_of_the_Rumin_Laplacian_Im_Im},
	and
	\ref{prop:isomorphim_among_eigenspaces_of_the_Rumin_Laplacian_Ker_Im}
  it is enough to consider the space
  \[
  W := \cQ^{n-1} (\lambda_{10}, \lambda_{01}) \cap \Ima \partial_{\RN }^{\adjoint } \cap \Ima \overline{\partial}_{\RN }^{\adjoint }
	.
  \]
Let $\psi \in W \setminus \{ 0 \}$.
We set
\[
  \underline{\psi}^{(0,0)} = \psi / \| \psi \|
  ,
  \quad
  \underline{\psi}^{(1,0)} = \partial_{\RN } \psi / \| \partial_{\RN } \psi \|
  ,
  \quad
  \underline{\psi}^{(0,1)} = \overline{\partial}_{\RN } \psi / \| \overline{\partial}_{\RN } \psi \|
  .
\]
Then, we have
\[
  d_{\RN } \underline{\psi}^{(0,0)}
  =
  \sqrt{\lambda_{10}}
  \underline{\psi}^{(1,0)}
  +
  \sqrt{\lambda_{01}}
  \underline{\psi}^{(0,1)}
\]
and
\[
d_{\RN } d_{\RN }^{\adjoint }
\left(
  \sqrt{\lambda_{10}}
  \underline{\psi}^{(1,0)}
  +
  \sqrt{\lambda_{01}}
  \underline{\psi}^{(0,1)}
\right)
=
(\lambda_{10} + \lambda_{01} )
\left(
  \sqrt{\lambda_{10}}
  \underline{\psi}^{(1,0)}
  +
  \sqrt{\lambda_{01}}
  \underline{\psi}^{(0,1)}
\right)
.
\]
Therefore, we have the eigenvalue of $\Delta_{\RN}$ on $d_{\RN} W$:
\begin{align}
	(\lambda_{10} + \lambda_{01} )^2.
\label{eq:isomorphim_among_eigenspaces_of_the_Rumin_Laplacian_Im_Im_3}
\end{align}
Let us find an eigenvalue on $(d_{\RN} W)^{\bot }$, which is the orthogonal complement of $\partial_{\RN } W
+
\overline{\partial}_{\RN } W$.
We note that
\[
  \sqrt{\lambda_{01}}
  \underline{\psi}^{(1,0)}
  -
  \sqrt{\lambda_{10}}
  \underline{\psi}^{(0,1)}
\in (d_{\RN} W)^{\bot }.
\]
Let $\lambda_{T}$ be the eigenvalue of $- \ii \cL_T$ on $\cQ^{n}(\lambda_{10},\lambda_{01} )$,
 $A = \lambda_{T} - 2 \lambda_{10} $ and $B = \lambda_{T} + 2 \lambda_{01} $.
By \eqref{eq:D-1}, we see
\begin{align*}
D
\left(
  \sqrt{\lambda_{01}}
  \underline{\psi}^{(1,0)}
  -
  \sqrt{\lambda_{10}}
  \underline{\psi}^{(0,1)}
\right)
& = \sqrt{-1} \theta \wedge
\left(
  A \sqrt{\lambda_{01}}
  \underline{\psi}^{(1,0)}
  - B \sqrt{\lambda_{10}}
  \underline{\psi}^{(0,1)}
\right)
.
\end{align*}
Since $D \left(
  \sqrt{\lambda_{10}}
  \underline{\psi}^{(1,0)}
  +
  \sqrt{\lambda_{01}}
  \underline{\psi}^{(0,1)}
\right) = 0$, we have
\begin{align*}
D^{\adjoint} D
\left(
  \sqrt{\lambda_{01}}
  \underline{\psi}^{(1,0)}
  -
  \sqrt{\lambda_{10}}
  \underline{\psi}^{(0,1)}
\right)
=
\frac{A^2 \lambda_{01} + B^2 \lambda_{10}}{\lambda_{10} + \lambda_{01}}
\left(
  \sqrt{\lambda_{01}}
  \underline{\psi}^{(1,0)}
  -
  \sqrt{\lambda_{10}}
  \underline{\psi}^{(0,1)}
\right)
.
\end{align*}
We note that
\[
	\frac{|A|^2 \lambda_{01} + |B|^2 \lambda_{10} }{\lambda_{10} + \lambda_{01} }
	= \frac{(\lambda_{T} - 2 \lambda_{10} )^2 \lambda_{01} + ( \lambda_{T} + 2\lambda_{01} )^2 \lambda_{10} }{\lambda_{10} + \lambda_{01} }
	= \lambda_{T}^2 + 4 \lambda_{10} \lambda_{01} .
\]
From Proposition \ref{prop:commuteness_holomorphic_Rumin_derivative} (3), we see
\begin{align*}
	\lambda_{T}^2 + 4 \lambda_{10} \lambda_{01} = (\lambda_{10} - \lambda_{01} )^2 + 4 \lambda_{10} \lambda_{01} = (\lambda_{10} + \lambda_{01} )^2 .
\end{align*}
We get the eigenvalue on $(d_{\RN} W)^{\bot }$
\begin{align*}
	(\lambda_{10} + \lambda_{01} )^2.
\end{align*}
Therefore, we obtain the eigenvalue on $(d_{\RN} W)^{\bot }$:
the eigenvalue of $\Delta_{\RN }$ is
\begin{equation}
	(\lambda_{10} + \lambda_{01} )^2.
	\label{eq:isomorphim_among_eigenspaces_of_the_Rumin_Laplacian_Im_Im_4}
\end{equation}

We obtain the following proposition:
\begin{proposition}
  \label{prop:eigen_delta_RN_is_positive}
  On $\Ima \partial_{\RN } + \Ima \overline{\partial}_{\RN }$, the operator $\Delta_{\RN}$ is positive.
\end{proposition}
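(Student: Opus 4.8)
The plan is to decompose $\Ima\partial_{\RN} + \Ima\overline{\partial}_{\RN}$ by degree and to read off the eigenvalue of $\Delta_{\RN}$ on each piece from the spectral decomposition and the computations above. Since the splitting $d_{\RN} = \partial_{\RN} + \overline{\partial}_{\RN}$ is used on $\cE^k(M)$ for $k < n$, the images $\Ima\partial_{\RN}$ and $\Ima\overline{\partial}_{\RN}$ are carried into degrees $k \leq n$, so it is enough to treat the ranges $k \leq n-1$ and $k = n$ separately.

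For $k \leq n-1$ I would argue as follows. By \eqref{eq:decomposition_cE^k} and Proposition \ref{prop:commuteness_holomorphic_Rumin_derivative} (2), $\Delta_{\RN}$ acts on $\cQ^k(\lambda_{10}, \lambda_{01})$ as the scalar $(\lambda_{10} + \lambda_{01})^2$, and the unique summand on which this scalar vanishes is $\cQ^k(0,0) = \Ker\Delta_{\partial_{\RN}} \cap \Ker\Delta_{\overline{\partial}_{\RN}} = \Ker\Delta_{\RN}$. Using $\Ker\Delta_{\RN} \subset \Ker\partial_{\RN}^{\adjoint} \cap \Ker\overline{\partial}_{\RN}^{\adjoint}$, every element $\partial_{\RN}\alpha + \overline{\partial}_{\RN}\beta$ is orthogonal to $\cQ^k(0,0)$, so $(\Ima\partial_{\RN} + \Ima\overline{\partial}_{\RN}) \cap \cE^k(M)$ lies in the direct sum of the $\cQ^k(\lambda_{10}, \lambda_{01})$ with $(\lambda_{10}, \lambda_{01}) \neq (0,0)$, where $\Delta_{\RN}$ is positive.

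For $k = n$ I would use the decomposition $(\Ima\partial_{\RN} + \Ima\overline{\partial}_{\RN}) \cap \cE^n(M) = \bigoplus (\partial_{\RN}\cQ^{n-1}(\lambda_{10}, \lambda_{01}) + \overline{\partial}_{\RN}\cQ^{n-1}(\lambda_{10}, \lambda_{01}))$ and examine the summands. The term $(\lambda_{10}, \lambda_{01}) = (0,0)$ vanishes, since $\cQ^{n-1}(0,0) = \Ker\Delta_{\RN} \subset \Ker\partial_{\RN} \cap \Ker\overline{\partial}_{\RN}$. For $\lambda_{10}, \lambda_{01} > 0$, Propositions \ref{prop:isomorphim_among_eigenspaces_of_the_Rumin_Laplacian_Im_Im} and \ref{prop:isomorphim_among_eigenspaces_of_the_Rumin_Laplacian_Ker_Im} reduce matters to $W = \cQ^{n-1}(\lambda_{10}, \lambda_{01}) \cap \Ima\partial_{\RN}^{\adjoint} \cap \Ima\overline{\partial}_{\RN}^{\adjoint}$, and \eqref{eq:isomorphim_among_eigenspaces_of_the_Rumin_Laplacian_Im_Im_3}--\eqref{eq:isomorphim_among_eigenspaces_of_the_Rumin_Laplacian_Im_Im_4} give the positive eigenvalue $(\lambda_{10} + \lambda_{01})^2$ on $d_{\RN}W$ and on $(d_{\RN}W)^{\bot}$. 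For a mixed term, say $\lambda_{10} = 0 < \lambda_{01}$, one has $\partial_{\RN}\cQ^{n-1}(0, \lambda_{01}) = 0$ (as $\Delta_{\partial_{\RN}}$ vanishes there), and the surviving $\overline{\partial}_{\RN}\cQ^{n-1}(0, \lambda_{01}) \subset \cE^n(M) \cap \Ima\overline{\partial}_{\RN} \cap \Ker\Delta_{\partial_{\RN}}$ carries $\Delta_{\RN} = -\cL_T^2 = \lambda_{01}^2 > 0$ by Propositions \ref{prop:isomorphim_among_eigenspaces_of_the_Rumin_Laplacian_Ker_Im} (2) and \ref{prop:commuteness_holomorphic_Rumin_derivative} (3); the case $\lambda_{01} = 0 < \lambda_{10}$ is symmetric.

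The main obstacle is the middle degree $k = n$, where the clean identity $\sqrt{\Delta_{\RN}} = \Delta_{\partial_{\RN}} + \Delta_{\overline{\partial}_{\RN}}$ is no longer available, so the eigenvalue cannot be read off a single spectral identity. There I expect the work to lie in combining the isomorphisms of Proposition \ref{prop:isomorphim_among_eigenspaces_of_the_Rumin_Laplacian_Im_Im} with the explicit second-order analysis of $D^{\adjoint}D$ carried out in \eqref{eq:isomorphim_among_eigenspaces_of_the_Rumin_Laplacian_Im_Im_3}--\eqref{eq:isomorphim_among_eigenspaces_of_the_Rumin_Laplacian_Im_Im_4}, together with the separate treatment of the mixed terms through $-\cL_T^2$.
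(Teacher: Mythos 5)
Your proof is correct and follows essentially the same route as the paper: decompose into the simultaneous eigenspaces $\cQ^k(\lambda_{10},\lambda_{01})$, observe that $\Ima\partial_{\RN}+\Ima\overline{\partial}_{\RN}$ misses $\cQ^k(0,0)=\Ker\Delta_{\RN}$, and read off the eigenvalue $(\lambda_{10}+\lambda_{01})^2$ from Propositions \ref{prop:commuteness_holomorphic_Rumin_derivative}, \ref{prop:isomorphim_among_eigenspaces_of_the_Rumin_Laplacian_Im_Im}, \ref{prop:isomorphim_among_eigenspaces_of_the_Rumin_Laplacian_Ker_Im} and \eqref{eq:isomorphim_among_eigenspaces_of_the_Rumin_Laplacian_Im_Im_3}--\eqref{eq:isomorphim_among_eigenspaces_of_the_Rumin_Laplacian_Im_Im_4}. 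In fact your write-up is more careful than the paper's two-line proof, since you explicitly handle the middle degree and the mixed cases $\lambda_{10}=0<\lambda_{01}$ (and symmetrically), which the paper leaves implicit.
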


\begin{proof}

  From Propositions \ref{prop:commuteness_holomorphic_Rumin_derivative} (1) and
  \ref{prop:isomorphim_among_eigenspaces_of_the_Rumin_Laplacian_Im_Im},
  \eqref{eq:isomorphim_among_eigenspaces_of_the_Rumin_Laplacian_Im_Im_3} and
  \eqref{eq:isomorphim_among_eigenspaces_of_the_Rumin_Laplacian_Im_Im_4},
  on $\cQ^{k} (\lambda_{10}, \lambda_{01} )$ for $k \leq n$, the eigenvalue is
  \[
    (\lambda_{10} + \lambda_{01} )^2 .
  \]
  If $\lambda_{10} >0$ or $\lambda_{01} >0$, the operator $\Delta_{\RN}$ is positive.

\end{proof}

\section{Comparison beween the Hodge-de Rham Laplacina and the Rumin Laplacian}
\label{ref:Comparison to the Hodge-de Rham Laplacian and the Rumin Laplacian}

The exterior algebra of $M$ splits into horizontal and vertical forms, which we will denote by
\begin{align*}
  \Omega^{\bullet} (M) = \Omega_H^{\bullet} (M) \oplus \theta \wedge \Omega_H^{\bullet} (M) .
\end{align*}
With respect to this decomposition, the exterior differential writes
\begin{align*}
    d (\alpha_H + \theta \wedge \alpha_T )
  =
    (d_b \alpha_H + L \alpha_T ) + \theta \wedge (\mathcal{L}_T \alpha_H - d_b \alpha_T ) ,
\end{align*}
that is,
\begin{align*}
    d
  =
    \begin{pmatrix}
      d_b & L \\
      \mathcal{L}_T & - d_b
    \end{pmatrix}
		.
\end{align*}
The Hodge-de Rham Laplacian satisfies
\[
    \Delta_{\deRham}
  =
    \begin{pmatrix}
      \Delta_b + \mathcal{L}_T^{\adjoint} \mathcal{L}_T + L \Lambda &&
      [d_b^{\adjoint}, L] + [d_b , \mathcal{L}_T^{\adjoint}]
      \\
      [\Lambda , d_b] + [ \mathcal{L}_T , d_b^{\adjoint}] &&
      \Delta_b + \mathcal{L}_T \mathcal{L}_T^{\adjoint} + \Lambda L
    \end{pmatrix}
    ,
\]
where
\[
  \Delta_b = d_b d_b^{\adjoint} + d_b^{\adjoint} d_b .
\]
On Sasakian manifolds, from \eqref{eq:adjoint_formula_of_del_b_and_del_ber_b}, we see
\begin{equation}
	\Delta_{\deRham}
  =
	\begin{pmatrix}
		\Delta_b + \mathcal{L}_T^{\adjoint} \mathcal{L}_T + L \Lambda
		&&
		\sqrt{-1} \partial_b - \sqrt{-1} \overline{\partial}_b
		\\
		- \sqrt{-1} \partial_b^{\adjoint} + \sqrt{-1} \overline{\partial}_b^{\adjoint}
		&&
		\Delta_b + \mathcal{L}_T \mathcal{L}_T^{\adjoint} + \Lambda L
	\end{pmatrix}
  .
	\label{eq:deRham_and_contact_structure_on_Sasakian}
\end{equation}

Let $\phi \in \Ker (\Delta_{\RN}) \cap \cE^{k} (M)$ for $k \leq n-1$.
Since $d_b^{\adjoint} $ and $\Lambda$ commute, we act $d_b^{\adjoint}$ to $\phi$,
\[
  d_b^{\adjoint} \phi = d_{\Rumin}^{\adjoint} \phi = 0 .
\]
From Proposition \ref{prop:commuteness_holomorphic_Rumin_derivative} (1),
$\Delta_{\partial_{\RN}}\phi = \Delta_{\overline{\partial}_{\RN}}\phi = 0$.
We act $\Lambda d_b$ to $\phi$, from \eqref{eq:adjoint_formula_of_del_b_and_del_ber_b},
\[
  \Lambda d_b \phi
  = [\Lambda , d_b ] \phi + d_b \Lambda \phi
  = \ii (- \partial_b^{\adjoint} + \overline{\partial}_b^{\adjoint} ) \phi
  = 0 .
\]
It follows that
\[
  d_b \phi
  =
  d_{\Rumin} \phi
  =
  0
  .
\]
Therefore, we obtain
\begin{equation}
  \Delta_b \phi = 0.
  \label{eq:act_delta_b_to_Harmonic}
\end{equation}
From Propositions
\ref{prop:commuteness_holomorphic_Rumin_derivative} (1) and (3),
we have
\begin{equation}
  \cL_T \phi = - \Delta_{\partial_{\RN }} + \Delta_{\overline{\partial}_{\RN }} \phi = 0.
  \label{eq:act_cL_T_to_Harmonic}
\end{equation}
From
\eqref{eq:deRham_and_contact_structure_on_Sasakian},
\eqref{eq:act_delta_b_to_Harmonic}, \eqref{eq:act_cL_T_to_Harmonic} and Proposition \ref{prop:commuteness_holomorphic_Rumin_derivative} (1),
we get
\begin{align*}
  \Delta_{\deRham} \phi
  = \left( \Delta_b + \mathcal{L}_T^{\adjoint} \mathcal{L}_T + L \Lambda \right) \phi
  + \theta \wedge \left( \sqrt{-1} \partial_b - \sqrt{-1} \overline{\partial}_b \right)
    \phi
  = 0.
\end{align*}
It means that for $k \leq n-1$,
\[
	\Ker ( \Delta_{\deRham} \colon \Omega^k (M) \to \Omega^k (M) ) \subset
	\Ker ( \Delta_{\RN} \colon \RN^k (M) \to \RN^k (M) )
  .
\]
We note that $\Ker (\Delta_{\deRham })$ is a finite dimensional vector space since $M$ is compact.
From Propositions \ref{prop:BGG_coh_agrees_with_deRahm_coh} and \ref{eq:sub-elliptic-Rumin_Laplacian} (2), for $k \leq n-1$, we conclude
\[
	\Ker ( \Delta_{\deRham} \colon \Omega^k (M) \to \Omega^k (M) ) =
	\Ker ( \Delta_{\RN} \colon \RN^k (M) \to \RN^k (M) )
  .
\]

Next we consider the case $k = n$.
Let $\phi \in \Ker (\Delta_{\RN}) \cap \cE^{n} (M)$.
From Proposition \ref{prop:eigen_delta_RN_is_positive},
we see
\begin{equation}
  \phi \in \Ker \partial_{\RN }^{\adjoint } \cap \Ker \overline{\partial}_{\RN }^{\adjoint }.
  \label{eq:kernel_RN_condition}
\end{equation}
From \eqref{eq:kernel_RN_condition} and \eqref{eq:isomorphim_among_eigenspaces_of_the_Rumin_Laplacian_Ker_Ker}, we have
\begin{equation}
  \cL_T \phi = 0.
  \label{eq:act_cL_T_to_Harmonic-2}
\end{equation}
From \eqref{eq:kernel_RN_condition}, we see
\begin{equation}
  \partial_b^{\adjoint} \phi = \partial_{\Rumin }^{\adjoint } \phi = 0,
  \quad
  \overline{\partial}_b^{\adjoint} \phi = \overline{\partial}_{\Rumin }^{\adjoint } \phi = 0.
  \label{eq:act_del_b_adj_to_Harmonic-2}
\end{equation}
Since $\cE^n (M) = \Ker (L) $, from \eqref{eq:adjoint_formula_of_del_b_and_del_ber_b},
we have
\begin{equation*}
  \partial_b \phi = \ii [ L , \overline{\partial}_b^{\adjoint} ] \phi = 0 ,
  \quad
  \overline{\partial}_b \phi = \ii [ L , \partial_b^{\adjoint} ] \phi = 0 .
\end{equation*}
We obtain
\begin{equation}
  \Delta_b \phi = 0.
  \label{eq:act_delta_b_to_Harmonic-2}
\end{equation}
From \eqref{eq:deRham_and_contact_structure_on_Sasakian},
\eqref{eq:act_cL_T_to_Harmonic-2},
\eqref{eq:act_del_b_adj_to_Harmonic-2}
and \eqref{eq:act_delta_b_to_Harmonic-2},
we conclude
\[
  \Delta_{\deRham} \phi = 0.
\]
It means that
\[
  \Ker ( \Delta_{\RN} \colon \RN^n (M) \to \RN^n (M) )
	\subset
	\Ker ( \Delta_{\deRham} \colon \Omega^n (M) \to \Omega^n (M) )
  .
\]
In the same way as the case $k \leq n-1$,
from Propositions \ref{prop:BGG_coh_agrees_with_deRahm_coh} and \ref{eq:sub-elliptic-Rumin_Laplacian} (2), we conclude
\[
  \Ker ( \Delta_{\RN} \colon \RN^n (M) \to \RN^n (M) ) =
	\Ker ( \Delta_{\deRham} \colon \Omega^n (M) \to \Omega^n (M) )
  .
\]

\section{Proof of Corollary \ref{cor:Forman_spectral_seq_on_contact}}
\label{sec:Forman_spectral_sequence}

From Corollary \ref{cor:Tachi-Fuji}, for $\phi \in \Ker ( \Delta_{\deRham} \colon \Omega^k (M) \to \Omega^k (M) )$, we get
\[
	d \phi = d_0 \phi + d_b \phi + d_T \phi = 0 ,
	\quad
	d_0 \phi =0,
	\quad
	d^{\adjoint} \phi = d_0^{\adjoint} \phi + d_b^{\adjoint} \phi + d_T^{\adjoint} \phi = 0,
	\quad
	d_0^{\adjoint} \phi = 0
	.
\]
Since $d_b \phi $ and $ d_T \phi $ are linearly independent, and $d_b^{\adjoint} \phi $ and $ d_T^{\adjoint} \phi$ are also linearly independent,
\[
	d_0 \phi = d_b \phi = d_T \phi
	= d_0^{\adjoint} \phi = d_b^{\adjoint} \phi = d_T^{\adjoint} \phi = 0 .
\]
Therefore, for $t \geq 0$, we obtain
\[
	\Delta_{t} \phi = (d_t d_t^{\adjoint} + d_t^{\adjoint} d_t ) \phi = 0 .
\]
Since
\[
\Ker (\Delta_{\deRham} ) \supset \bigcap_{t>0} \Ker (\Delta_{t} ),
\]
we have
\[
\Ker (\Delta_{\deRham} ) = \bigcap_{t>0} \Ker (\Delta_{t} ),
\]
that is,
we conclude Corollary \ref{cor:Forman_spectral_seq_on_contact}.

\section{Proof of Theorem \ref{theo:contact_torsion_Reeb_vector}}
\label{sec:contact torsion}

We set
\[
  2 \square_{\RN} := \sqrt{\Delta_{\RN}} + \ii \cL_T ,
  \quad
  2 \overline{\square}_{\RN} := \sqrt{\Delta_{\RN}} - \ii \cL_T .
\]
From \eqref{eq:isomorphim_among_eigenspaces_of_the_Rumin_Laplacian_Ker_Ker} and
Propositions \ref{prop:isomorphim_among_eigenspaces_of_the_Rumin_Laplacian_Im_Im}
and \ref{prop:isomorphim_among_eigenspaces_of_the_Rumin_Laplacian_Ker_Im},
we have
\begin{align*}
		\kappa_{\RN} (s)
	& =
		\sum_{k=0}^n (-1)^{k+1} (n+1-k) \zeta (\Delta_{\RN , k} ) (s)
	\\
	& =
		\sum_{k=0}^{n} (-1)^{k+1} (n+1-k)
			\Bigl(
				\zeta ( -\mathcal{L}_T^2 |_{\Ker \square_{\RN } \cap \Ima \overline{\square}_{\RN } \cap \mathcal{E}^k (M,E) } ) (s)
	\\
	& \hspace{40mm}
				+ \zeta ( -\mathcal{L}_T^2 |_{\Ima \square_{\RN } \cap \Ker \overline{\square}_{\RN} \cap \mathcal{E}^k (M,E) } ) (s)
  \\
	& \hspace{40mm}
        + \dim H^k (M,E)
        \Bigr)
    .
\end{align*}

\begin{bibdiv}
\begin{biblist}

  \bib{Albin-Quan-20}{article}{
    author  ={Albin, Pierre},
    author  ={Quan, Hadrian},
    title   ={Sub-Riemanian limit of the differential form heat kernels of contact manifolds},
    date    ={2020},
    journal ={International Mathematics Research Notices},
    pages   ={1\ndash 64pp},
    note    ={Available at \href{https://doi.org/10.1093/imrn/rnaa270}{https://doi.org/10.1093/imrn/rnaa270}},
  }

  \bib{Bismut-Zhang-92}{book}{
    author  ={Bismut, Jean~Michel},
    author  ={Zhang, Weiping},
    title   ={An extension of a theorem by {C}heeger and {M}\"{u}ller. With an appendix by {F}ran\c{c}ois {L}audenbach},
    series  ={Ast\'erisque},
    number  ={205},
    date    ={1992},
    pages   ={235pp},
    ISBN    ={0303-1179},
  }



  \bib{Blair-Goldberg-67}{article}{
	      author={Blair, D.~E.},
	      author={Goldberg, S.~I.},
	       title={Topology of almost contact manifolds},
	        date={1967},
	     journal={J. Differential Geometry},
	      volume={1},
	       pages={347\ndash 354},
	}

  \bib{Calderbank-Dimer-01}{article}{
    author={Calderbank, David~M.~J.},
    author={Dimer, Tammo},
    title={Differential invariants and curved {B}ernstein-{G}elfand-{G}elfand sequences},
      date={2001},
    journal={J. Reine Angew. Math.},
    volume={537},
    pages={67\ndash 103},
  }

  \bib{Cap-Slovak-Soucek-01}{article}{
    author={\u{C}ap, Andreas},
    author={Slov\'{a}k, Jan},
    author={Sou\u{c}ek, Vladim\'{i}r},
     title={{B}ernstein-{G}elfand-{G}elfand sequences},
      date={2001},
    journal={Ann. of Math. (2)},
    volume={154},
    number={1}
     pages={97\ndash 113},
  }

  \bib{Case-21}{unpublished}{
        author={Case, Jeffrey~S.},
         title={The bigtaded {R}umin complex via differential forms},
          date={2021},
          page={68 pages}
          note={Preprint, available at \href{https://arxiv.org/abs/2108.13911v1}{arXiv:2108.13911v1}},
  }

  \bib{Forman-95}{article}{
      author={Forman, Robin},
       title={Spectral sequences and adiabatic limits},
        date={1995},
     journal={Comm. Math. Phys.},
      volume={168},
      number={1},
       pages={57\ndash 116},
}

  \bib{Fujitani-66}{article}{
        author={Fujitani, T.},
         title={{C}omplex-valued differential forms on normal contact {R}iemannian manifolds},
          date={1966},
       journal={Tohoku Math. J.},
        volume={81},
         pages={349\ndash 361},
  }

  \bib{Kitaoka-19}{article}{
  	      author={Kitaoka, Akira},
  	       title={Analytic torsions associated with the Rumin complex on contact spheres},
  	        date={2020},
            ISSN={0129-167X},
         journal={Internat. J. Math.},
  	      volume={31},
  	      number={13},
  	       pages={2050112},
  }

  \bib{Kitaoka-20}{unpublished}{
  	      author={Kitaoka, Akira},
  	       title={{R}ay-{S}inger torsion and the {R}umin {L}aplacian on lens spaces},
  	        date={2020},
           note={Preprint, available at \href{https://arxiv.org/abs/2009.03276}{arXiv:2009.03276}},
  }

  \bib{Mazzeo-Melrose-90}{article}{
      author={Mazzeo, Rafe~R.},
      author={Melrose, Richard~B.},
       title={The adiabatic limit, {H}odge
         cohomology and {L}eray's spectral sequence for a fibration},
        date={1990},
     journal={J. Differential
     Geom.},
      volume={31},
      number={1},
       pages={185\ndash 213},
}

  \bib{Ponge-08}{article}{
        author={Ponge, Rapha\"{e}l~S.},
         title={{H}eisenberg calculus and spectral theory of hypoelliptic operators on {H}eisenberg manifolds},
        date={2008},
         ISSN={0065-9266},
      journal={Mem. Amer. Math. Soc.},
       volume={194},
       number={906},
        pages={viii+ 134},
         url={https://doi.org/10.1090/memo/0906},
  }


	\bib{Rumin-94}{article}{
				author={Rumin, Michel},
				 title={Formes diff{\'e}rentielles sur les vari{\'e}t{\'e}s de contact (French)},
					date={1994},
					ISSN={0022-040X},
			 journal={J. Differential Geom.},
				volume={39},
				number={2},
				 pages={281\ndash 330},
	}

  \bib{Rumin-99}{article}{
    author={Rumin, Michel},
    title={Differential geometry on C-C spaces and application to the Novikov-Shubin numbers of nilpotent Lie groups},
    date={1999},
    journal={C. R. Acad. Sci. Paris S\'er. I Math.},
    volume={329},
    number={11},
    pages={985\ndash 990},
  }

	\bib{Rumin-00}{article}{
				author={Rumin, Michel},
				 title={Sub-{R}iemannian limit of the differential form spectrum of contact manifods},
					date={2000},
					ISSN={1016-443X},
			 journal={GAFA, Geom. funct. anal.},
				volume={10},
				number={2},
				 pages={407\ndash 452},
	}

  \bib{Rumin-05}{article}{
  author={Rumin, Michel},
   title={An introdoction to spectral and differential geometry in {C}arnot-{C}arath\'{e}odory spaces},
  date={2005},
   journal={Rendiconti fdel Circolo Matematico di Palermo Serie II suppl.},
  volume={75},
   pages={139\ndash 196},
  }

	\bib{Rumin-Seshadri-12}{article}{
	      author={Rumin, Michel},
	      author={Seshadri, Neil},
	       title={Analytic torsions on contact manifolds},
	        date={2012},
	        ISSN={0373-0956},
	     journal={Ann. Inst. Fourier(Grenoble)},
	      volume={62},
	       pages={727\ndash 782},
	}

  \bib{Tachibana-65}{article}{
        author={Tachibana, Shun\mdash ichi },
         title={On harmonic tensors in compact {S}asakian spaces},
          date={1965},
          ISSN={2186-585X},
       journal={Tohoku Math. J. (2)},
       	number={17},
         pages={271\ndash 284},
  }

\end{biblist}
\end{bibdiv}

\end{document}